\begin{document}

\def\dbl{[\hskip -1pt[}
\def\dbr{]\hskip -1pt]}
\title[Unique jet determination and extension  of germs of CR maps]{Unique jet determination  and   extension \\of germs  of CR maps  into spheres}

\author{Nordine Mir}

\address{ Texas A\&M University at Qatar, Science program, PO Box 23874, Education City, Doha, Qatar
}

\email{nordine.mir@qatar.tamu.edu}

\author{Dmitri Zaitsev}
\address{School of Mathematics, Trinity College Dublin, Dublin 2, Ireland}
\email{zaitsev@maths.tcd.ie}


\subjclass[2010]{32D15, 32V20, 32H02, 32H04, 32H12, 32V25, 32V40}
\keywords{CR maps, finite jet determination, holomorphic extension}



\def\Label#1{\label{#1}}

\def\1#1{\ov{#1}}
\def\2#1{\widetilde{#1}}
\def\6#1{\mathcal{#1}}
\def\4#1{\mathbb{#1}}
\def\3#1{\widehat{#1}}
\def\7#1{\mathscr{#1}}
\def\K{{\4K}}
\def\LL{{\4L}}

\def \MM{{\4M}}

\def \B{{\4B}^{2N'-1}}

\def \H{{\4H}^{2l-1}}

\def \F{{\4H}^{2N'-1}}

\def \LL{{\4L}}

\def\Re{{\sf Re}\,}
\def\Im{{\sf Im}\,}
\def\id{{\sf id}\,}

\def\s{s}
\def\k{\kappa}
\def\ov{\overline}
\def\span{\text{\rm span}}
\def\ad{\text{\rm ad }}
\def\tr{\text{\rm tr}}
\def\xo {{x_0}}
\def\Rk{\text{\rm Rk\,}}
\def\sg{\sigma}
\def \emxy{E_{(M,M')}(X,Y)}
\def \semxy{\scrE_{(M,M')}(X,Y)}
\def \jkxy {J^k(X,Y)}
\def \gkxy {G^k(X,Y)}
\def \exy {E(X,Y)}
\def \sexy{\scrE(X,Y)}
\def \hn {holomorphically nondegenerate}
\def\hyp{hypersurface}
\def\prt#1{{\partial \over\partial #1}}
\def\det{{\text{\rm det}}}
\def\wob{{w\over B(z)}}
\def\co{\chi_1}
\def\po{p_0}
\def\fb {\bar f}
\def\gb {\bar g}
\def\Fb {\ov F}
\def\Gb {\ov G}
\def\Hb {\ov H}
\def\zb {\bar z}
\def\wb {\bar w}
\def \qb {\bar Q}
\def \t {\tau}
\def\z{\chi}
\def\w{\tau}
\def\Z{{\mathbb Z}}
\def\phi{\varphi}
\def\eps{\epsilon}

\def \T {\theta}
\def \Th {\Theta}
\def \L {\Lambda}
\def\b {\beta}
\def\a {\alpha}
\def\o {\omega}
\def\l {\lambda}

\def \im{\text{\rm Im }}
\def \re{\text{\rm Re }}
\def \Char{\text{\rm Char }}
\def \supp{\text{\rm supp }}
\def \codim{\text{\rm codim }}
\def \Ht{\text{\rm ht }}
\def \Dt{\text{\rm dt }}
\def \hO{\widehat{\mathcal O}}
\def \cl{\text{\rm cl }}
\def \bS{\mathbb S}
\def \bK{\mathbb K}
\def \bD{\mathbb D}
\def \bC{\mathbb C}
\def \bL{\mathbb L}
\def \bZ{\mathbb Z}
\def \bN{\mathbb N}
\def \scrF{\mathcal F}
\def \scrK{\mathcal K}
\def \mc #1 {\mathcal {#1}}
\def \scrM{\mathcal M}
\def \cR{\mathcal R}
\def \scrJ{\mathcal J}
\def \scrA{\mathcal A}
\def \scrO{\mathcal O}
\def \scrV{\mathcal V}
\def \scrL{\mathcal L}
\def \scrE{\mathcal E}
\def \hol{\text{\rm hol}}
\def \aut{\text{\rm aut}}
\def \Aut{\text{\rm Aut}}
\def \J{\text{\rm Jac}}
\def\jet#1#2{J^{#1}_{#2}}
\def\gp#1{G^{#1}}
\def\gpo{\gp {2k_0}_0}
\def\emmp {\scrF(M,p;M',p')}
\def\rk{\text{\rm rk\,}}
\def\Orb{\text{\rm Orb\,}}
\def\Exp{\text{\rm Exp\,}}
\def\Span{\text{\rm span\,}}
\def\d{\partial}
\def\D{\3J}
\def\pr{{\rm pr}}

\def \CZZ {\C \dbl Z,\zeta \dbr}
\def \D{\text{\rm Der}\,}
\def \Rk{\text{\rm Rk}\,}
\def \CR{\text{\rm CR}}
\def \ima{\text{\rm im}\,}
\def \I {\mathcal I}

\def \M {\mathcal M}

\newtheorem{Thm}{Theorem}[section]
\newtheorem{Cor}[Thm]{Corollary}
\newtheorem{Pro}[Thm]{Proposition}
\newtheorem{Lem}[Thm]{Lemma}
\newtheorem{Def}[Thm]{Definition}
\newtheorem{Conj}[Thm]{Conjecture}
\newtheorem{Rem}[Thm]{Remark}

\theoremstyle{remark}

\newtheorem{Exa}[Thm]{Example}
\newtheorem{Exs}[Thm]{Examples}

\numberwithin{equation}{section}

\newcommand{\crb}{\mathcal{V}}
\newcommand{\dbar}{\bar\partial}
\newcommand{\genmat}{\lambda}
\newcommand{\polynorm}[1]{{|| #1 ||}}
\newcommand{\vnorm}[1]{\left\|  #1 \right\|}
\newcommand{\asspol}[1]{{\mathbf{#1}}}
\newcommand{\Cn}{\mathbb{C}^n}
\newcommand{\Cd}{\mathbb{C}^d}
\newcommand{\Cm}{\mathbb{C}^m}
\newcommand{\C}{\mathbb{C}}
\newcommand{\CN}{\mathbb{C}^N}
\newcommand{\CNp}{\mathbb{C}^{N^\prime}}
\newcommand{\Rd}{\mathbb{R}^d}
\newcommand{\Rn}{\mathbb{R}^n}
\newcommand{\RN}{\mathbb{R}^N}
\newcommand{\R}{\mathbb{R}}
\newcommand{\bR}{\mathbb{R}}
\newcommand{\N}{\mathbb{N}}
\newcommand{\dop}[1]{\frac{\partial}{\partial #1}}
\newcommand{\dopt}[2]{\frac{\partial #1}{\partial #2}}
\newcommand{\vardop}[3]{\frac{\partial^{|#3|} #1}{\partial {#2}^{#3}}}
\newcommand{\br}[1]{\langle#1 \rangle}
\newcommand{\infnorm}[1]{{\left\| #1 \right\|}_{\infty}}
\newcommand{\onenorm}[1]{{\left\| #1 \right\|}_{1}}
\newcommand{\deltanorm}[1]{{\left\| #1 \right\|}_{\Delta}}
\newcommand{\omeganorm}[1]{{\left\| #1 \right\|}_{\Omega}}
\newcommand{\nequiv}{{\equiv \!\!\!\!\!\!  / \,\,}}
\newcommand{\bk}{\mathbf{K}}
\newcommand{\p}{\prime}
\newcommand{\tV}{\mathcal{V}}
\newcommand{\poly}{\mathcal{P}}
\newcommand{\ring}{\mathcal{A}}
\newcommand{\ringk}{\ring_k}
\newcommand{\ringktwo}{\mathcal{B}_\mu}
\newcommand{\germs}{\mathcal{O}}
\newcommand{\On}{\germs_n}
\newcommand{\mcl}{\mathcal{C}}
\newcommand{\formals}{\mathcal{F}}
\newcommand{\Fn}{\formals_n}
\newcommand{\autM}{{\Aut (M,0)}}
\newcommand{\autMp}{{\Aut (M,p)}}
\newcommand{\holmaps}{\mathcal{H}}
\newcommand{\biholmaps}{\mathcal{B}}
\newcommand{\autmaps}{\mathcal{A}(\CN,0)}
\newcommand{\jetsp}[2]{ G_{#1}^{#2} }
\newcommand{\njetsp}[2]{J_{#1}^{#2} }
\newcommand{\jetm}[2]{ j_{#1}^{#2} }
\newcommand{\glnc}{\mathsf{GL_n}(\C)}
\newcommand{\glmc}{\mathsf{GL_m}(\C)}
\newcommand{\glc}{\mathsf{GL_{(m+1)n}}(\C)}
\newcommand{\glk}{\mathsf{GL_{k}}(\C)}
\newcommand{\smC}{\mathcal{C}^{\infty}}
\newcommand{\anC}{\mathcal{C}^{\omega}}
\newcommand{\kC}{\mathcal{C}^{k}}
\newcommand{\fps}[1]{\C\llbracket #1 \rrbracket}
\newcommand{\fpstwo}[2]{#1\llbracket #2 \rrbracket}
\newcommand{\cps}[1]{\C\{#1\}}
\newcommand{\cinfty}{\6C^\infty}
\newcommand{\diffable}[1]{\6C^{#1}}
\newcommand{\idealsheaf}{\6I}
\newcommand{\note}[1]{#1}
\newcommand{\inp}[1]{\langle #1 \rangle}
\newcommand*{\newfaktor}[2]{
  \raisebox{0.5\height}{\ensuremath{#1}}
  \mkern-5mu\diagup\mkern-4mu
  \raisebox{-0.5\height}{\ensuremath{#2}}
}



\begin{abstract} 
We provide a new way of simultaneously parametrizing  arbitrary local CR maps from real-analytic generic manifolds $M\subset\C^N$ into spheres $\4S^{2N'-1}\subset \C^{N'}$ of any dimension. The parametrization is obtained as a composition of universal rational maps with a holomorphic map depending only on $M$. As applications, we obtain rigidity results of different flavours
such as unique jet determination
and global extension of local CR maps.



 \end{abstract}

\maketitle

\section{Introduction}

In 1907, Poincar\'e \cite{Po} discovered the first remarkable geometric properties of local biholomorphic mappings sending real hypersurfaces in multidimensional complex space into each other. His work, together with the later work of  Cartan \cite{Cartan}, Tanaka \cite{Tan} and Chern-Moser \cite{CM} unveiled 
the striking strong rigidity properties that such maps possess. Among such properties, of particular interest to us in this paper are those of uniqueness and extension.  

It follows from \cite{Cartan, Tan, CM} that local real-analytic CR diffeomorphims, i.e. local biholomorphic mappings, between Levi-non degenerate real-analytic hypersurfaces in $\C^N$ are  uniquely determined by their 2-jets at any 
fixed point of the source hypersurface.  Subsequent work over the last decades has been  devoted to understand to what extent such a uniqueness property were true in further generality. In \cite{ELZ, BERcag, BER00, BMR, Juhlin}, optimal conditions for the finite jet determination property to hold for CR automorphisms between general real-analytic hypersurfaces, or CR submanifolds of higher codimension, have been found. For Levi-degenerate CR manifolds, a number of results have been obtained exploring the relationship between the jet order required to get uniqueness  and the geometry of the manifolds, see \cite{ELZ, LM07, JL13, KMZ} and the references therein. In another direction, the above mentioned uniqueness result due to Tanaka, Cartan and Chern-Moser for CR diffeomorphisms  has recently been shown to hold for sufficiently smooth CR manifolds, see \cite{Ebasian, KZ, BB, BBM, BDL}. 

The present paper proposes a universal parametrization tool (Proposition~\ref{p:leader} below) 
with applications including the finite jet determination and global extension problems for local holomorphic mappings sending real-analytic CR submanifolds embedded in complex spaces of different dimension. In contrast to the biholomorphic setting, these are largely unexplored territories besides the case of local CR maps  between spheres  $\C^N \supset \4S^{2N-1} \to \4S^{2N'-1}\subset \C^{N'}$, where $N,N'\geq 2$. Indeed, by the work of Forstneri\v c \cite{F89}, such maps extend automatically as (global) rational maps  with a uniform bound on their degree; as a consequence, unique determination by a finite jet (at any fixed point) necessarily holds for such maps. However, the above strategy becomes no longer available when tackling CR embeddings from real-analytic hypersurfaces into spheres, as  the mappings under consideration need not be rational. And despite of having been subject of many related studies (see e.g. \cite{EHZ1, EHZ2, EL}), it is still an open question to decide whether finite jet determination holds in the latter setting. In this paper, we will answer this by the affirmative by proving the following more general result:

\begin{Thm}\label{t:mainbest}
Let $M\subset \C^N$ be a real-analytic minimal CR submanifold.  Then for every point $p\in M$, there exists an integer $k=k(p)$ such that if $f,g\colon (M,p)\to \4S^{2N'-1}$ are two germs of $\6C^\infty$-smooth CR maps with $j^{k}_pf=j^{k}_pg$, then $f=g$. Furthermore, the map  $M\ni p\mapsto k(p)$ may be chosen to be  bounded on compact subsets of $M$.
\end{Thm}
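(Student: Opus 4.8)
The plan is to deduce the statement from the universal parametrization of Proposition~\ref{p:leader}, after two preliminary reductions that bring an arbitrary smooth CR map into a form to which that proposition applies. First I would reduce to the case of \emph{generic} $M$ and of \emph{holomorphic} maps. Since a CR submanifold is generic in its intrinsic complexification $X\subset\C^N$, and since both the CR structure of $M$ and its minimality are intrinsic to $M$, I may replace $\C^N$ by $X$ and assume $M\subset\C^N$ generic without altering the maps or their jets. The more substantial preliminary point is \emph{holomorphic extension}: because $M$ is minimal and real-analytic, the components of a $\6C^\infty$ CR map $f\colon(M,p)\to\4S^{2N'-1}$ are CR functions that extend holomorphically to a wedge with edge $M$ (Tumanov/Tr\'epreau), and since the target sphere is a Levi-nondegenerate real-analytic hypersurface, the resulting map extends holomorphically to a full neighborhood of $p$ in $\C^N$. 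In particular $f$ (and likewise $g$) is the restriction to $M$ of a germ of a holomorphic map $F\colon(\C^N,p)\to\C^{N'}$ sending $M$ into $\4S^{2N'-1}$, and $j^k_pf=j^k_pg$ holds if and only if the corresponding holomorphic jets $j^k_pF$ and $j^k_pG$ agree.

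Second, I would feed these holomorphic maps into Proposition~\ref{p:leader}. That proposition furnishes, for each $p$, a universal rational map and a holomorphic map $\Psi=\Psi_{M,p}$ depending only on $(M,p)$ and not on the CR map, together with an integer $k=k(p)$, such that every such $F$ is recovered as the composition of the universal rational map applied to $(j^k_pF,\Psi_{M,p})$. Since both the rational map and $\Psi_{M,p}$ are the same for $F$ and for $G$, the hypothesis $j^k_pF=j^k_pG$ forces $F=G$ as germs, whence $f=g$ on $M$. This is precisely the advantage of a parametrization that is \emph{universal} in the map and depends on $M$ only through a single fixed holomorphic object.

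Finally, for the boundedness of $p\mapsto k(p)$ on compact sets, I would track the origin of the integer $k(p)$ in the construction underlying Proposition~\ref{p:leader}. It is governed by discrete invariants attached to the holomorphic data $\Psi_{M,p}$, essentially ranks and orders of vanishing of holomorphic maps and their derivatives, which depend on $p$ in an upper semicontinuous fashion. An upper semicontinuous integer-valued function is locally bounded above, and hence bounded on every compact subset of $M$, yielding the final assertion.

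I expect the main obstacle to lie in the holomorphic-extension step and, above all, in \emph{uniformity}: one must ensure that the extension, the construction of $\Psi_{M,p}$, and the resulting jet order $k(p)$ all degrade in a controlled (upper semicontinuous) way as $p$ varies, including across the loci where $M$ is less nondegenerate. Controlling these exceptional loci, where a naive pointwise choice of $k(p)$ could a priori blow up, while keeping $k(p)$ locally bounded, is the delicate part, and it is exactly what the $M$-dependent holomorphic map in Proposition~\ref{p:leader} is designed to regulate.
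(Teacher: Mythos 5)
Your two preliminary reductions (passing to the intrinsic complexification so that $M$ becomes generic, and using minimality plus analyticity of the target sphere to extend $f,g$ holomorphically) are exactly the paper's reductions, via \cite{MMZ03,Mi03}. The gap is in your second step: you read Proposition~\ref{p:leader} as recovering a holomorphic germ $F$ at $q$ from its jet $j^k_qF$ \emph{at the base point} $q$, i.e. as $F=$ (universal rational map)$(j^k_qF,\Psi_{M,q})$. That is not what the proposition gives. Identity \eqref{e:eleven} expresses $f(Z)$ through the jets $(\partial^\mu \bar f(\bar p),\partial^\mu f(p))_{|\mu|\le 2sN'}$ at \emph{variable} points $p\in M$ near $q$, and the denominator $D_{j_0}(H(t,p,q,Z),\dots)$ is only guaranteed not to vanish \emph{identically in $(t,p,Z)$}; on the particular slice $p=q$ it may vanish identically in $(t,Z)$, so you cannot specialize $p=q$ and read off $F$ from its jet at $q$. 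Moreover, the index $j_0$ depends on the map, so $F$ and $G$ may come with different numerators and denominators even when their jets agree. Consequently $j^k_qF=j^k_qG$ does not by itself force the two parametrizing expressions to coincide, and your conclusion $F=G$ does not follow.

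What closes this gap in the paper (Proposition~\ref{p:dobetter}) is an ideal-theoretic argument that is absent from your proposal: for each pair of indices $(i,j)$ and each component $\nu$ one forms the cross-multiplied differences $R_{i,j,\nu}=P_{i,\nu}D_{j}-P_{j,\nu}D_{i}$ with \emph{independent} jet variables $\Lambda,\widehat\Lambda$ substituted for the jets of $f$ and $g$, Taylor-expands them in $Z$ at $Z=q$, and invokes Frisch's theorem \cite{Fri} that the ring $\6A\left(\overline{V\times W^2}\right)\left[\Lambda,\overline\Lambda,\widehat\Lambda,\overline{\widehat\Lambda}\right]$ is Noetherian. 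This yields a single integer $K$ such that the coefficients $R^\alpha_{i,j,\nu}$ with $|\alpha|\le K$ generate the ideal of all of them; then $j^K_qf=j^K_qg$ annihilates these generators as real-analytic functions of $(t,p)$ along $M$ near $q$, hence annihilates all coefficients, hence $R_{j_1,j_2,\nu}\equiv 0$, which gives $(f_\nu-g_\nu)\,D_{j_1}\,D_{j_2}\equiv 0$ and so $f=g$. Note that this same mechanism settles the final assertion of the theorem: the Noetherian argument produces one $K$ valid for \emph{all} $q\in M\cap W$ simultaneously, so local uniformity, and hence boundedness of $k(\cdot)$ on compact sets, is automatic. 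Your alternative route via ``upper semicontinuity of ranks and vanishing orders'' is only a heuristic: you neither identify the discrete invariants in question nor prove any semicontinuity, and the paper never argues this way --- the uniformity in $q$ is precisely what Noetherianity is invoked to deliver.
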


Recall here that $M$ is called minimal (in the sense of \cite{T90}) if $M$ does not contain any proper CR submanifold of the same CR dimension as that of $M$. Since compact real-analytic real hypersurfaces are always minimal (see \cite{DF}), we immediately obtain the following:

\begin{Cor}\label{c:compact}
 For every compact real-analytic hypersurface $M\subset \C^N$, there exists an integer $\ell=\ell(M)$ such that if $f,g\colon (M,p)\to \4S^{2N'-1}$ are two germs of $\6C^\infty$-smooth CR maps at some point $p\in M$ with $j^{\ell}_pf=j^{\ell}_pg$, it follows that $f=g$.
\end{Cor}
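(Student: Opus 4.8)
The plan is to deduce the Corollary directly from Theorem~\ref{t:mainbest}, the only additional input being the minimality of compact real-analytic hypersurfaces. First I would invoke the fact of Diederich--Fornaess \cite{DF}, recalled just above the statement, that every compact real-analytic hypersurface $M\subset\C^N$ is minimal. This places $M$ within the scope of Theorem~\ref{t:mainbest}, which then supplies, for every $p\in M$, an integer $k(p)$ enjoying the $k(p)$-jet determination property for germs of $\6C^\infty$-smooth CR maps $(M,p)\to\4S^{2N'-1}$, and—crucially—with the assignment $p\mapsto k(p)$ chosen to be bounded on compact subsets of $M$.

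Second, since $M$ is itself compact, it is a compact subset of itself, so the boundedness clause of Theorem~\ref{t:mainbest} guarantees that $\ell:=\ell(M):=\sup_{p\in M}k(p)$ is finite; being a bounded supremum of nonnegative integers, $\ell$ is itself an integer. It then remains only to record the elementary monotonicity of jets: if $j^{\ell}_pf=j^{\ell}_pg$ and $\ell\ge k(p)$, then a fortiori $j^{k(p)}_pf=j^{k(p)}_pg$. Applying the conclusion of Theorem~\ref{t:mainbest} at the base point $p$ then forces $f=g$, which completes the argument for this $\ell$.

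There is no genuine analytic obstacle at this stage; the entire substance is packaged inside Theorem~\ref{t:mainbest}, and the deduction is purely formal. The two points that nonetheless deserve explicit mention are: (i) it is precisely the boundedness of $p\mapsto k(p)$ on compact sets, combined with compactness of $M$, that upgrades the pointwise integers $k(p)$ into a single uniform order $\ell(M)$ valid at all base points simultaneously—without this clause the corollary would fail to produce an $\ell$ independent of $p$; and (ii) one must take $\ell\ge k(p)$ for every $p\in M$, so that agreement of the $\ell$-jets of $f$ and $g$ genuinely implies agreement of the $k(p)$-jets to which Theorem~\ref{t:mainbest} is actually applied. Both are automatic once $\ell$ is defined as the supremum above.
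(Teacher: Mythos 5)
Your proposal is correct and follows exactly the paper's route: the corollary is deduced from Theorem~\ref{t:mainbest} by invoking the Diederich--Fornaess fact \cite{DF} that compact real-analytic hypersurfaces are minimal, then using compactness of $M$ together with the boundedness of $p\mapsto k(p)$ on compact sets to obtain a single uniform jet order $\ell(M)$. The paper treats this as immediate, and your explicit spelling out of the supremum and the jet-monotonicity step fills in precisely what the paper leaves implicit.
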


Corollary \ref{c:compact} can be applied to get the following boundary uniqueness theorem for proper holomorphic mappings into balls of higher dimension.

\begin{Cor}\label{c:proper}
 Let $\Omega \subset \C^N$ be  a  bounded domain with smooth real-analytic boundary and $\4B^{N'}\subset \C^{N'}$ be the unit ball. Then there exists an  integer $\ell$, depending only on $\partial \Omega$, such that if $F,G\colon \Omega \to \4B^{N'}$ are two proper holomorphic mappings extending smoothly up to the boundary near some point $p\in \partial \Omega$ with $j_p^\ell F=j^\ell_pG$, it follows that $F=G$.
\end{Cor}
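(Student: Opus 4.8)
The plan is to deduce Corollary~\ref{c:proper} from Corollary~\ref{c:compact} by passing from the proper holomorphic maps $F,G\colon\Omega\to\4B^{N'}$ to their boundary CR maps and invoking the uniqueness statement already established. First I would recall the standard boundary regularity and extension theory for proper maps into balls. Since $\partial\Omega$ is smooth real-analytic and $F,G$ extend smoothly up to the boundary near $p$, their boundary restrictions $f:=F|_{\partial\Omega}$ and $g:=G|_{\partial\Omega}$, defined on a neighborhood of $p$ in $\partial\Omega$, are germs of $\6C^\infty$-smooth CR maps $(\partial\Omega,p)\to\4S^{2N'-1}$, because properness forces $F(\partial\Omega)\subset\partial\4B^{N'}=\4S^{2N'-1}$ (the cluster set of the boundary lies in the target boundary) and the boundary values of a holomorphic map are automatically CR. Thus the hypotheses of Corollary~\ref{c:compact} are in force for the pair $f,g$ with the compact real-analytic hypersurface $M:=\partial\Omega$.

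The second step is the jet translation. I would set $\ell$ to be the integer $\ell=\ell(\partial\Omega)$ furnished by Corollary~\ref{c:compact}, which depends only on $\partial\Omega$ as required. The assumption $j_p^\ell F=j_p^\ell G$ on the holomorphic maps passes to equality of the boundary CR jets $j_p^\ell f=j_p^\ell g$: the $\ell$-jet of the boundary CR map at $p$ is determined by the $\ell$-jet of the ambient holomorphic map through restriction of the Taylor data to $\partial\Omega$, so matching ambient jets yields matching CR jets of the same order. With $j_p^\ell f=j_p^\ell g$ in hand, Corollary~\ref{c:compact} gives $f=g$ as germs of CR maps at $p$.

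The final step is to upgrade the boundary coincidence $f=g$ to the interior coincidence $F=G$ on $\Omega$. I would use that $F$ and $G$ are holomorphic on the connected domain $\Omega$ and agree on the germ $f=g$ along an open piece of the boundary: their boundary values coincide on a nonempty open subset of $\partial\Omega$ near $p$, and by the boundary uniqueness theorem for holomorphic functions (together with the smooth extension up to that boundary piece, via an edge-of-the-wedge or Privalov-type argument), this forces $F\equiv G$ throughout $\Omega$ by the identity principle. The main obstacle I anticipate is justifying this last passage cleanly, namely ensuring that equality of the CR boundary maps on a boundary neighborhood genuinely propagates to the interior; this rests on the noncharacteristic nature of $\partial\Omega$ and the fact that a function holomorphic on $\Omega$ and smooth up to $\partial\Omega$ is determined by its Cauchy data on any boundary hypersurface piece, so that $F-G$ vanishing to infinite order along the boundary germ yields $F\equiv G$.
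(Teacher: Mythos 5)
Your proposal is correct and follows essentially the same route as the paper, which deduces Corollary~\ref{c:proper} directly from Corollary~\ref{c:compact} by restricting $F,G$ to the boundary near $p$ (properness placing the boundary values in $\4S^{2N'-1}$) and then propagating the resulting boundary coincidence into $\Omega$. The extra care you take with the jet translation and the boundary uniqueness/identity-principle step is exactly the content the paper leaves implicit.
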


We will  establish Theorem \ref{t:mainbest} (as well as Corollaries \ref{c:compact} and \ref{c:proper})  for local holomorphisms, since all $\6C^\infty$-smooth CR maps under consideration automatically extend holomorphically to a neighborhood of $p$ in $\C^N$ according to \cite{MMZ03,Mi03}. 

As mentioned above, we apply the same universal parametrization tool (Proposition~\ref{p:leader}) 
to study the independent question about global extension of germs of CR maps.
We shall prove:

\begin{Thm}\label{t:meromorphic} Let $M\subset \C^N$ be a real-analytic generic minimal submanifold. Then for every point $p_0\in M$, there exists a neighborhood $\Omega$ of $p_0$ in $\C^N$ such that for every $q\in \Omega\cap M$, any germ $f\colon (M,q)\to \4S^{2N'-1}$ of a $\6C^\infty$-smooth CR map extends meromorphically to $\Omega$. Furthermore, if $M$ is a real  hypersurface, the meromorphic extension over $\Omega$ is in fact holomorphic.
\end{Thm}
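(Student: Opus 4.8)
The plan is to read off both assertions from the universal parametrization of Proposition~\ref{p:leader}, exactly as in the proof of Theorem~\ref{t:mainbest}; the only genuinely new point is the removal of poles in the hypersurface case.

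First I would pass to holomorphic maps. By \cite{MMZ03,Mi03} every germ $f\colon(M,q)\to\4S^{2N'-1}$ of a $\6C^\infty$-smooth CR map extends to a germ at $q$ of a holomorphic map $F\colon(\CN,q)\to\CNp$, so it suffices to extend $F$ meromorphically to $\Omega$. Next I would apply Proposition~\ref{p:leader} to produce a neighborhood $\Omega$ of $p_0$, \emph{independent of the base point} $q\in\Omega\cap M$ and of $f$, together with a holomorphic map $H\colon\Omega\to\C^{m}$ depending only on $M$ and a universal rational map $R$, such that
\[
F(z)=R\bigl(H(z),c\bigr)
\]
on a neighborhood of $q$, where $c$ is a constant vector recording a finite jet of $F$ at $q$. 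Since $H$ is holomorphic throughout $\Omega$ and $R$ is rational, the right-hand side is a meromorphic map on $\Omega$ that coincides with $F$ near $q$; this is the desired meromorphic extension, and its domain $\Omega$ is uniform precisely because the data $\Omega$, $H$, $R$ in Proposition~\ref{p:leader} do not depend on $q$ or $f$.

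It remains, when $M$ is a hypersurface, to upgrade this meromorphic extension to a holomorphic one, and here I would exploit that $F$ takes its boundary values in the sphere. Since $M$ is a minimal hypersurface, the germ of $f$ at $q$ extends holomorphically to the pseudoconvex side and sends a one-sided neighborhood of $M$ into the closed unit ball, so $F$ is bounded there; moreover $\|F\|\equiv 1$ on $M\cap\Omega$ at every point where $F$ is holomorphic, by real-analyticity of $\|F\|^2-1$ along $M$. I would then argue that the polar divisor of $F$, a complex hypersurface, cannot abut the real hypersurface $M$: writing $F=(A_1,\dots,A_{N'})/B$ in lowest terms, the relation $\sum_j|A_j|^2=|B|^2$ on $M$ forces all $A_j$ and $B$ to vanish simultaneously at any common point, confining $P\cap M$ to the indeterminacy locus of complex codimension at least two, which transversality to $M$ then excludes. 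Hence $F$ is holomorphic on a full neighborhood of $M\cap\Omega$, and the remaining task is to rule out poles in the interior of $\Omega$, which I would do by propagating the into-the-ball boundedness and invoking Riemann's extension theorem to obtain a genuine holomorphic map on $\Omega$.

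The step I expect to be the main obstacle is precisely this last one: converting the pointwise information ``$F$ maps $M$ into the sphere'' into the global absence of poles over all of $\Omega$. The subtle issue is that the pole divisor of $R(H(\cdot),c)$ can only live on the non-pseudoconvex side, where the map is no longer bounded by $1$, so one must use the two-sidedness of a hypersurface together with reflection in the sphere to transfer the boundedness from the pseudoconvex side and close the argument. In higher codimension a generic submanifold does not separate $\CN$ and no such second side is available, which is exactly why only meromorphy can be asserted in that generality.
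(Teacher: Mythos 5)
Your treatment of the meromorphic part is correct and is essentially the paper's own proof: after holomorphic extension of the germ via \cite{MMZ03,Mi03}, one fixes $t\in V$ and an auxiliary point $p\in M$ near $q$ for which the denominator in \eqref{e:eleven} is not identically zero; since $H$ is holomorphic in $(t,Z)$ on $V\times W^3$, the right-hand side of \eqref{e:eleven} becomes a quotient of functions holomorphic in $Z$ on all of $W$, and this quotient is the meromorphic extension to $\Omega:=W$, uniformly in $q$ and $f$. (Your minor imprecisions --- the jet in Proposition~\ref{p:leader} is evaluated at the auxiliary point $p\in M$ rather than at $q$, and the holomorphic map also depends on the fixed $(t,p,q)$ --- are harmless.)

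The hypersurface case is where your proposal has a genuine gap, one you partly acknowledge yourself. The paper settles this step in one line by citing Chiappari's theorem \cite{Chia}, whose content is precisely that a meromorphic map carrying (an open piece of) a real-analytic hypersurface into a sphere can have no poles and no points of indeterminacy along that hypersurface. You attempt to reprove this from scratch, and the key deduction fails: from $\sum_j|A_j|^2=|B|^2$ on $M$ you correctly conclude that the pole set can meet $M$ only inside the indeterminacy locus, of complex codimension at least two, but the claim that ``transversality to $M$ then excludes'' such contact is false as a matter of dimension counting. A complex hypersurface can meet a real hypersurface in a set of real codimension four or more: for instance $\{z_2=0\}$ meets $M=\{\Im z_2=|z_1|^2\}\subset\C^2$ only at the origin. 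Ruling out exactly this configuration (an indeterminacy point sitting on $M$) is the hard part of the Cima--Suffridge and Chiappari theorems \cite{CS,Chia}, and their proofs use the geometry of the sphere in an essential way, not just the modulus identity. Your closing step --- ``propagating the into-the-ball boundedness'' and ``reflection in the sphere'' --- is not an argument; note also that the one-sided holomorphic extension provided by minimality need not map into the closed ball (continuity only gives $\|F\|$ close to $1$ near $M$, and the maximum principle is unavailable on a one-sided neighborhood whose boundary is not contained in $M$). The repair is simply to do what the paper does: observe that the meromorphic extension $F$ sends all of $M\cap\Omega$ (off its pole set) into the sphere, by real-analytic continuation of $\|F\|^2-1$ along the connected set $M\cap\Omega$ from a neighborhood of $q$, and then invoke \cite{Chia} to conclude that $F$ has no poles or indeterminacy points along $M$, which is the holomorphy assertion needed both for the theorem and for its use in Corollary~\ref{c:global}.
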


Using standard analytic continuation arguments (see  \S \ref{s:final}), Theorem \ref{t:meromorphic} provides the following global extension result.

\begin{Cor}\label{c:global}
Let $M\subset \C^N$ be a real-analytic
hypersurface
that is both connected and simply-connected and
 contains no complex-analytic hypersurface of $\C^N$.
Then for every point $p_0\in M$, any germ of a $\6C^\infty$-smooth CR map $f\colon (M,p_0)\to \4S^{2N'-1}$ extends holomorphically to a neighborhood of $M$ in $\C^N$. If, moreover, $M$ does not contain any positive dimensional complex-analytic subvariety, the same conclusion holds for merely $\6C^{N'-N+1}$-smooth CR maps.
\end{Cor}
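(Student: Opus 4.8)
The plan is to globalize the local holomorphic extension provided by Theorem~\ref{t:meromorphic} through a monodromy argument, so that the only content beyond that theorem is the bookkeeping of the analytic continuation together with the topological input $\pi_1(M)=0$. First I would verify that Theorem~\ref{t:meromorphic} applies at every point of $M$. A real hypersurface in $\C^N$ is automatically generic, and a real-analytic hypersurface is minimal at a point $p$ precisely when it contains no germ of a complex hypersurface through $p$; hence the hypothesis that $M$ carries no complex-analytic hypersurface of $\C^N$ forces $M$ to be minimal at each of its points. Since $M$ is a hypersurface, Theorem~\ref{t:meromorphic} then yields, for every $q\in M$, a neighborhood $\Omega_q$ of $q$ in $\C^N$ to which every germ at $q$ of a $\6C^\infty$-smooth CR map into $\4S^{2N'-1}$ extends holomorphically.

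Next I would continue the given germ $f$ analytically along $M$. Extend $f$ holomorphically to a function $F_0$ on $\Omega_{p_0}$; for a point $q_1\in\Omega_{p_0}\cap M$ the restriction $F_0|_M$ has a germ at $q_1$ which is again a $\6C^\infty$-smooth CR map into the sphere, and so extends holomorphically to $\Omega_{q_1}$. The two extensions agree on the overlap, because they agree on the open piece $M\cap\Omega_{p_0}\cap\Omega_{q_1}$ of the generic hypersurface $M$, and a holomorphic function on a connected open set is determined by its restriction to any open piece of a generic submanifold. Iterating along a path $\gamma\subset M$ issuing from $p_0$ thus continues $F_0$, and the resulting germs glue consistently on all overlaps by the same generic-uniqueness principle.

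I would then invoke simple connectivity to render the continuation single-valued. The neighborhoods $\Omega_q$ sweep out a tubular neighborhood $\mathcal U$ of $M$ in $\C^N$, which deformation retracts onto $M$ and hence satisfies $\pi_1(\mathcal U)=\pi_1(M)=0$. The continuation above is locally constant under homotopies of paths in $M$, again by uniqueness of the local holomorphic extension applied over the uniform neighborhoods covering a given homotopy; so by the monodromy theorem the germ produced at each $q\in M$ is independent of the chosen path from $p_0$. Since $M$ is connected and simply connected, this produces a well-defined single-valued holomorphic function $F$ on a neighborhood of $M$ whose restriction to $M$ continues $f$ and maps $M$ into $\4S^{2N'-1}$; in particular $F$ agrees with $f$ near $p_0$.

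Finally, for the finitely smooth statement I would replace the $\6C^\infty$ input by the corresponding holomorphic-extension result for CR maps of class $\6C^{N'-N+1}$ into spheres, available under the stronger hypothesis that $M$ contains no positive-dimensional complex-analytic subvariety; once a local holomorphic extension is in hand from this finite order of regularity, the identical continuation-plus-monodromy argument applies verbatim. The main obstacle I anticipate is precisely this low-regularity step: one must guarantee that the local extension persists under merely $\6C^{N'-N+1}$ smoothness and, crucially, that the objects being glued on overlaps are genuinely holomorphic before the generic-uniqueness principle can be invoked, since in the $\6C^\infty$ case the gluing leans entirely on uniqueness of holomorphic functions across the generic $M$.
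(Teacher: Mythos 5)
Your proposal is correct and follows essentially the same route as the paper: local holomorphic extension at every point via Theorem~\ref{t:meromorphic} (after observing that the absence of complex hypersurfaces in $M$ gives minimality everywhere), then analytic continuation along paths in $M$ made single-valued by the monodromy theorem using connectedness and simple connectivity, and finally a reduction of the $\6C^{N'-N+1}$ case to the smooth case via known regularity results for CR maps into spheres (the paper cites \cite{F89, Mi03} for exactly the step you flag as the main obstacle). Your write-up simply supplies more detail than the paper's three-sentence proof.
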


There is a substantial literature related to Corollary \ref{c:global}.  Poincar\'e \cite{Po} was the first  to prove that any local biholomorphic map sending a piece of the sphere in $\C^2$ into itself extends as a global holomorphic map between the corresponding unit balls. Poincar\'e's global extension phenomenon was  later extended by Tanaka \cite{Tan} and Alexander \cite{Alex} for spheres in arbitrary dimension. Further generalizations of this extension property for local biholomorphisms have been investigated: extension along paths of local maps between strongly pseudoconvex real-analytic hypersurfaces  was considered in \cite{Pin, VEK}; for algebraic real hypersurfaces, or even  CR manifolds of higher codimension,  general extension results as algebraic maps have been established in \cite{W77, HJ98, BERacta}.  In contrast, Theorem \ref{t:meromorphic} and Corollary \ref{c:global}  address the global extension problem for local holomorphisms of positive codimension, on which much less is known. In that regard, the reader should note that  Theorem \ref{t:meromorphic} is equivalent to an extension result along any path starting from $p_0$. Hence, in the case where $M$ is a strictly pseudoconvex real hypersurface, Theorem \ref{t:meromorphic} recovers a result from \cite{Pi}. Note that in the case where $M$ is also a sphere, Corollary \ref{c:global} follows from the rationality result given in  \cite{F89} and \cite{CS}. We should mention that for maps between spheres, hyperquadrics, or boundaries of bounded symmetric domains, global holomorphic extension of local holomorphic maps may follow from more general results known as "rigidity"  theorems. The reader is referred to the papers \cite{W79, Faran, H99, H03, BH05, KZinvent, KZma} and  the survey paper \cite{HYsurvey} for more on this specific topic.

The main novelty of the present work consists of providing a unified framework that allows us  to study, at the same time and despite of being very different in nature, the finite jet determination and global extension problems for local holomorphic maps. After collecting   some preliminary results and notation in \S \ref{s:notation}, we explain the details of our approach in  \S \ref{s:meromorphicparam}; we prove that germs of CR maps as in Theorem \ref{t:mainbest} can be universally meromorphically parametrized by their jets at a generic point. In fact, one needs a very precise statement indicating the (real-analytic) dependence on the base point where the germ is defined, see Proposition \ref{p:leader}. Such a result not only allows us to understand the structure of germs of CR maps whose base point changes, but is also crucial in order to choose a jet order $k(p)$ (as in Theorem \ref{t:mainbest}) that remains bounded on compact subsets of $M$. The proofs of Theorems \ref{t:mainbest} and \ref{t:meromorphic} and Corollary \ref{c:global} are finalized in \S \ref{s:final}.







\section{Notation and preliminaries}\label{s:notation}

Throughout the paper,  all neighborhoods are assumed to be open and connected and we denote, for any power series $u(x)$ with complex coefficients (centered at the origin),  by  $\bar u (x)$ the power series  obtained by taking complex conjugates of the coefficients of $u$.

Let $M\subset \C^N$ be real-analytic generic submanifold through the origin, of CR dimension $n$ and codimension $d$, so that $N=n+d$. We may assume that  $M=\{Z\in U:\rho (Z,\bar Z)=0\}$  where  $\rho=(\rho_1,\ldots, \rho_{d})$ is a real-analytic vector-valued defining function for $M$ defined on some neighborhood $U\subset \C^N$ of $0$ satisfying $\partial \rho_1\wedge \ldots \wedge \partial \rho_d\not =0$ over $U$. Choosing $U$ so that $U=\overline{U}$, we define the complexification of $M$ by 
$$\6M :=\{(Z,\zeta)\in U\times U: \rho (Z,\zeta)=0\}$$ as well as $\overline{\6M}=\{(\zeta,Z)\in U\times U: (Z,\zeta)\in \6M\}$. As in \cite{Z97}, we shall also consider the iterated complexifications $\6M^j$, for $j\geq 1$,  as follows. For $j=2\ell-1$ odd, we set 
$$\6M^{2\ell-1}:=
	\{(Z,\zeta^1,Z^1, \ldots,Z^{\ell-1},\zeta^{\ell})\in U\times \ldots \times U: 
	(Z,\zeta^1)\in \6M, 
	(\zeta^1, Z^1)\in \1{\6M}, 
	(Z^1,\zeta^2)\in \6M,
	\ldots, 
	(Z^{\ell-1},\zeta^{\ell})\in \6M\}$$
and for $j=2\ell$ even we set
$$\6M^{2\ell}:=\{
	(Z,\zeta^1,\ldots,Z^{\ell-1},\zeta^{\ell},Z^{\ell})\in U \times \ldots \times U: 
	(Z,\zeta^1)\in \6M, 
	(\zeta^1, Z^1)\in \1{\6M}, 
	(Z^1,\zeta^2)\in \6M,
	\ldots, 
	(\zeta^{\ell}, Z^{\ell})\in \1{\6M}
\}.$$
Recall that we can choose normal coordinates $Z=(z,w)\in \C^n \times \C^d$ for $M$ near $0$, so  that (the germ of) $M$ (at $0$) is given by 
\begin{equation}\label{e:define}
w=Q(z,\bar z,\bar w),
\end{equation}
where $Q=(Q^1,\ldots,Q^d)$ is a $\C^d$-valued  holomorphic map defined in some fixed neighborhood of the origin (see e.g. \cite{BERbook}).   Since $M$ is a real submanifold, the map $Q$ satisfies, in addition, the following identity:
\begin{equation}\label{e:real}
Q(z,\bar z,\bar Q (\bar z,,z,w))=w.
\end{equation}
Writing $\zeta =(\chi,\tau)\in \C^n \times \C^d$, let us define the following tangent vector fields to $\6M$, 
obtained from complexification of the $(0,1)$ vector fields on $M$:

\begin{equation}\label{e:Lj}
\6L_j:=\frac{\partial}{\partial \chi_j}+\sum_{\nu=1}^d \bar Q_{\chi_j}^d(\chi,z,w) \frac{\partial }{\partial \tau_\nu},\quad j=1,\ldots,n.
\end{equation}
We will make use the Segre maps associated to $M$, as introduced in \cite{BERacta, BERbook}.  Shrinking $U$ if necessary, for $p\in U$ (which later will furthermore lie on $M$), let us recall how are defined the Segre maps of order $\kappa \in \Z_+$. Following the notion of \cite{BRZ01}, we first set $v^0(p):=p$ and
\begin{equation}\label{e:mid251}
v^{\kappa+1}(t^0,t^1,\ldots, t^{\kappa };p)=(t^{0},Q(t^{0},\overline{v^\kappa}(t^1,\ldots,t^\kappa;p)).
\end{equation}
Note that the Segre maps are defined and holomorphic over $U_1\times \ldots \times U_1 \times U$ provided $U_1$ and $U$ are sufficiently small neighborhoods of the origin in $\C^n$ and $\C^N$ respectively. Since we will need only finitely many of those Segre maps,  we choose and fix neighborhoods $U_1$ and $U$  as above so that all these maps $v_\kappa$'s are well defined and holomorphic on $U_1^{\kappa+1}\times U$.

For every integer $\kappa \geq 1$, the real-analytic map $\Xi \colon U_1^{2\kappa}\times (M\cap U_1)\to \6M^{2\kappa +1}$  given by
\begin{equation}\label{e:param}
\Xi (t^0,\ldots, t^{2\kappa-1},p):=(v^{2\kappa}(t^0,\ldots,t^{2\kappa-1};p),\overline{v^{2\kappa-1}}(t^1,\ldots,t^{2\kappa-1};p),\ldots,\overline{v^1}(t^{2\kappa-1};p),p,\bar p)
\end{equation}
parametrizes the (germ at the origin of the) submanifold \begin{equation}\label{e:Nl}
\6N^\kappa=\{(Z,\zeta^1,\ldots,\zeta^\kappa,p,\bar p): (Z,\zeta^1,\ldots,\zeta^\kappa,p)\in \6M^{2\kappa},\ p\in M \}  \subset \6M^{2\kappa+1}.
\end{equation}

We recall the following finite type/minimality criterion from \cite{BER99}: 
\begin{Thm}\label{t:criterion}
Let $M$ be a germ of a real-analytic generic minimal submanifold at the origin. With the above notation, there exists an integer $s\le N$ such that the following holds:
\begin{equation}\label{e:rank}
{\rm max}\Big \{{\rm rk} \frac{\partial v^{2s}}{\partial (t^0,t^{s+1},t^{s+2},\ldots, t^{2s-1})}(0,x^1,\ldots,x^{s-1},x^s,x^{s-1},\ldots,x^1;0) :x^1,\ldots,x^s\in U_1 \Big \}=N
\end{equation}

\begin{equation}\label{e:vanishing}
v^{2s}(0,x^1,\ldots,x^{s-1},x^s,x^{s-1},\ldots,x^1;0) =0
\end{equation}

\end{Thm}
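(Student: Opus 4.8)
The plan is to read Theorem~\ref{t:criterion} as the Segre-set description of minimality and to establish its two assertions \eqref{e:rank} and \eqref{e:vanishing} separately. First I would record the elementary consequences of the normal coordinates and of the reality condition \eqref{e:real}: namely $Q(0,\chi,\tau)=\tau$, $Q(z,0,0)=0$, and that $Q(z,\chi,\cdot)$ and $\overline Q(\chi,z,\cdot)$ are mutually inverse in their last slot. Next I would recall, following \cite{BERacta,BER99,BERbook}, that the germ at $0$ of the image of $v^{j}(\cdot;0)$ is the $j$-th Segre set $S^{j}_{0}$, and that $\dim S^{1}_{0}=n\le\dim S^{2}_{0}\le\cdots\le N$ is a sequence of integers that strictly increases until it stabilizes; hence it stabilizes at some order $s_{0}\le N$. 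The essential input is the Baouendi--Ebenfelt--Rothschild theorem identifying this stable value with the dimension of the local CR orbit of $0$, so that minimality in the sense of \cite{T90} is equivalent to $\dim S^{s_{0}}_{0}=N$. I would then set $s:=s_{0}\le N$, for which $\dim S^{s}_{0}=N$.

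For the vanishing \eqref{e:vanishing} I would argue by induction on $s$, exploiting the palindromic pattern of the arguments $(0,x^{1},\dots,x^{s-1},x^{s},x^{s-1},\dots,x^{1};0)$, in which the matched slots $t^{j}$ and $t^{2s-j}$ carry the \emph{same} value $x^{j}$. The recursion \eqref{e:mid251} builds $v^{2s}$ from the innermost argument outward, and the reality relation \eqref{e:real}, applied with $z=\chi=x^{j}$, makes each outer step $Q(x^{j},\cdot)$ undo the conjugate inner step $\overline Q(x^{j},\cdot)$; telescoping these matched pairs from the middle $t^{s}=x^{s}$ outward collapses the chain back to the base point. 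The outermost slot $t^{0}=0$ then forces the $z$-component to vanish, and $Q(0,\chi,\tau)=\tau$ propagates this to the $w$-component, giving $v^{2s}=0$ at the folded argument. For $s=1$ this is the direct check $v^{2}(0,x^{1};0)=\big(0,Q(0,\overline{v^{1}}(x^{1};0))\big)=0$, since $v^{1}(x^{1};0)=(x^{1},0)$.

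For the rank assertion \eqref{e:rank} I would use the cocycle identity
\[
v^{2s}(t^{0},\dots,t^{2s-1};0)=v^{s+1}\big(t^{0},\dots,t^{s};\,\overline{v^{\,s-1}}(t^{s+1},\dots,t^{2s-1};0)\big),
\]
obtained by unwinding \eqref{e:mid251} (compare \cite{BRZ01,Z97}). Freezing the forward variables $t^{1},\dots,t^{s}$ at $x^{1},\dots,x^{s}$ and letting only $(t^{0},t^{s+1},\dots,t^{2s-1})$ vary, the map becomes
\[
(t^{0},u)\longmapsto\big(t^{0},\,Q(t^{0},\overline{v^{\,s}}(x^{1},\dots,x^{s};u))\big),\qquad u=\overline{v^{\,s-1}}(t^{s+1},\dots,t^{2s-1};0)\in\overline{S^{\,s-1}_{0}}.
\]
Its differential at the folded point, where $t^{0}=0$ and (by \eqref{e:vanishing}) the value is $0$, splits, using $Q(0,\chi,\tau)=\tau$, into the $n$ everywhere-independent directions produced by $t^{0}$ (whose $z$-component is the identity) and the purely transversal ($\tau$-)directions obtained by varying $u$ over $\overline{S^{\,s-1}_{0}}$. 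Its rank is therefore $n$ plus the rank of the $\tau$-part of $u\mapsto\overline{v^{\,s}}(x^{1},\dots,x^{s};u)$; by the finite-type criterion of \cite{BER99}, the latter equals $d$ for generic $x^{1},\dots,x^{s}$ precisely when the Segre sets fill a neighborhood of $0$, i.e.\ under minimality and the choice $s=s_{0}$. Taking the maximum over $x^{1},\dots,x^{s}$ then yields $N=n+d$ in \eqref{e:rank}.

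The main obstacle is exactly this rank localization: proving that the \emph{half} of the variables singled out in \eqref{e:rank} --- the return block $t^{s+1},\dots,t^{2s-1}$ together with $t^{0}$, with $t^{1},\dots,t^{s}$ frozen --- already realizes the full orbit dimension at the base point. This is where the reflection structure is used essentially, since one must choose the frozen forward arguments generic so that the conjugate return sweep meets the Segre variety transversally, and the identification of this rank with the orbit dimension rests on the Baouendi--Ebenfelt--Rothschild analysis relating Segre sets, CR orbits, and iterated brackets of the fields \eqref{e:Lj} and their conjugates. By contrast, the uniform bound $s\le N$ is routine, following from the strict growth of $\dim S^{j}_{0}$ until stabilization.
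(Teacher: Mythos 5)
The paper never proves Theorem~\ref{t:criterion}: it is recalled verbatim from \cite{BER99} (``We recall the following finite type/minimality criterion from \cite{BER99}''), so there is no internal argument to compare with, and your attempt must be judged as a reconstruction of the proof in \cite{BER99}. Two of your three steps are sound. The bound $s\le N$ via strict growth of the generic Segre-set dimensions until stabilization is a correct use of the cited theory. The proof of \eqref{e:vanishing} is essentially right and self-contained: writing the chain $V_k=(t^k, Q(t^k,V_{k+1}))$ (resp.\ $\bar Q$ at odd levels), the complexified reality identity $Q(z,\chi,\bar Q(\chi,z,w))=w$ together with normality $Q(0,\chi,\tau)=\tau$ yields $V_{s-j}=V_{s+j}$ by induction on $j$, collapsing the palindromic chain to $0$. (Two bookkeeping slips: in the complexified identity the two outer slots carry the \emph{same} variable but the middle slot a different one, so ``applied with $z=\chi=x^j$'' is imprecise; and for $s$ odd, which is the paper's convention, the inner factor in your cocycle identity is $v^{s-1}$, not $\overline{v^{s-1}}$.)

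The genuine gap is in \eqref{e:rank}, exactly where you yourself locate ``the main obstacle''. Your reduction is correct: at $t^0=0$ the Jacobian splits into the block $I_n$ from the $z$-component plus the rank of the $\tau$-part (last $d$ components) of the return map $u\mapsto v^{s}(x;u)$ with $u$ constrained to the $(s-1)$-st Segre set (the image of $v^{s-1}(\cdot\,;0)$), so \eqref{e:rank} amounts to this $\tau$-rank being $d$ at the palindromic point for some $x$. But you then assert this ``by the finite-type criterion of \cite{BER99}'', and that is circular: the criterion gives generic rank $N$ of the full Segre map in \emph{all} of its variables, and in general joint generic rank $N$ of a map $(a,b)\mapsto\Phi(a,b)$ says nothing about the rank in $b$ alone with $a$ frozen. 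Here, moreover, the base point $u$ ranges only over a set of dimension $<N$, the relevant evaluation points form a thin complex-analytic slice (the palindromic set), and the rank must be attained \emph{at} such a point, not merely generically. Bridging exactly this — from generic full rank to full rank in the return-half of the variables at points mapping to the origin — is the nontrivial content of the theorem, which \cite{BER99} establishes by a separate inductive rank-propagation argument; your final paragraph concedes that this step ``rests on the Baouendi--Ebenfelt--Rothschild analysis'', i.e.\ it is outsourced rather than proved. If you intend to quote \cite{BER99} for that step, the honest course is to quote their proposition in full — which is the theorem itself, as the paper does — rather than present the remaining reductions as a proof.
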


We shall also need another result from \cite{BER99} which can be seen as a version of the implicit function theorem with singularities (see \cite[Proposition 4.1.18]{BER99}).

\begin{Pro}\label{pro:BER}
Let $u(x,t,y)$ be a $\C^k$-valued holomorphic map 
defined in a neighborhood of the origin $\C^{r_1}\times \C^{r_2}\times \C^k$. Assume that 
$$u(x,0,0)=0,\quad \Delta (x):={\rm det} \left ( \frac{\partial u}{\partial y}(x,0,0)\right ) \not \equiv 0.$$
Then there exists  a $\C^k$-valued holomorphic map $\Theta$
defined  in a neighborhood of $0$ in $\C^{r_1+r_2+k}$, vanishing at $0$, such that 
$$u\left (x,t,\Delta (x) \, \Theta \left (x,\frac{t}{\Delta (x)^2},\frac{\sigma}{\Delta (x)^2}\right )\right)=\sigma$$
for all $(x,t,\sigma)\in \C^{r_1+r_2+k}$ such that $\Delta (x)\not =0$ and $x$ and $\left |\frac{t}{\Delta (x)^2}\right |+\left |\frac{\sigma}{\Delta (x)^2}\right |$ sufficiently small.
\end{Pro}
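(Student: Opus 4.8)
The plan is to turn the singular (where $\Delta(x)=0$) implicit-function problem $u(x,t,y)=\sigma$ into a genuinely nonsingular one, solvable by the ordinary holomorphic implicit function theorem, by performing exactly the rescaling announced in the statement. First I would Taylor-expand $u$ in its last two blocks of variables. Since $u(x,0,0)=0$, this gives $u(x,t,y)=P(x)t+A(x)y+Q(x,t,y)$, where $P(x)=\frac{\partial u}{\partial t}(x,0,0)$ is $k\times r_2$, $A(x)=\frac{\partial u}{\partial y}(x,0,0)$ is $k\times k$ with $\det A(x)=\Delta(x)$, and $Q$ is holomorphic and vanishes to second order in $(t,y)$ at $(t,y)=0$; all three are holomorphic in $x$ near the origin.

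Next I carry out the substitution dictated by the statement, writing $t=\Delta(x)^2\,\tilde t$, $\sigma=\Delta(x)^2\,\tilde\sigma$ and seeking $y$ in the form $y=\Delta(x)\,\Theta$. Abbreviating $\Delta=\Delta(x)$, the equation becomes $\Delta^2 P\tilde t+\Delta A\Theta+Q(x,\Delta^2\tilde t,\Delta\Theta)=\Delta^2\tilde\sigma$. The crucial bookkeeping point is that, because $Q$ vanishes to second order, each monomial of $Q(x,\Delta^2\tilde t,\Delta\Theta)$ carries a factor $\Delta^{2|\alpha|+|\beta|}$ with $2|\alpha|+|\beta|\ge 2$, so the whole remainder is divisible by $\Delta^2$. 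To see holomorphy even across $\{\Delta(x)=0\}$, I would first treat the determinant as an independent variable $\delta$, set $R(x,\delta,\tilde t,\Theta):=\delta^{-2}Q(x,\delta^2\tilde t,\delta\Theta)$, and verify by a routine convergence estimate (valid for $|\delta|\le 1$) that $R$ is holomorphic at the origin; then $\tilde Q(x,\tilde t,\Theta):=R(x,\Delta(x),\tilde t,\Theta)$ is holomorphic in $(x,\tilde t,\Theta)$.

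Dividing the identity by $\Delta$ yields $A(x)\Theta=\Delta(x)\big(\tilde\sigma-P(x)\tilde t-\tilde Q\big)$, and multiplying on the left by the adjugate matrix $B(x)$ of $A(x)$, which is holomorphic in $x$ and satisfies $B(x)A(x)=A(x)B(x)=\Delta(x)I$, I can cancel a further factor $\Delta(x)$ to reach the fixed-point equation
$$\Theta = B(x)\big(\tilde\sigma - P(x)\tilde t - \tilde Q(x,\tilde t,\Theta)\big).$$
This equation is the heart of the argument: it is holomorphic in all variables near the origin, it admits $\Theta=0$ when $(\tilde t,\tilde\sigma)=0$ (since $\tilde Q(x,0,0)=0$), and its differential in $\Theta$ at that point is the identity (the first-order $\Theta$-derivatives of $\tilde Q$ vanish there, $Q$ being of second order in $(t,y)$). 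Hence the classical holomorphic implicit function theorem produces a unique holomorphic $\Theta=\Theta(x,\tilde t,\tilde\sigma)$ on a neighborhood of $0$ in $\C^{r_1+r_2+k}$, vanishing at the origin.

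Finally I would reverse the algebra to confirm that this $\Theta$ solves the original problem: multiplying the fixed-point identity by $A(x)$ and using $A(x)B(x)=\Delta(x)I$ recovers $\Delta A\Theta=\Delta^2(\tilde\sigma-P\tilde t-\tilde Q)$, whence $u(x,\Delta^2\tilde t,\Delta\Theta)=\Delta^2\tilde\sigma$ wherever $\Delta(x)\neq 0$; rewritten with $\tilde t=t/\Delta(x)^2$ and $\tilde\sigma=\sigma/\Delta(x)^2$ this is precisely the asserted identity. The only genuine obstacle is the middle step: arranging the two clearing operations—dividing by $\Delta$ and multiplying by the adjugate—so that the linearization at the origin becomes the identity rather than the singular matrix $A(x)$. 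Once the rescaling exponents $2,2,1$ (for $t$, $\sigma$, $y$ respectively) and the adjugate are chosen correctly, the rest reduces to the Taylor expansion, the convergence check for $\tilde Q$, and a single application of the ordinary implicit function theorem.
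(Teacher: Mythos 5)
Your argument is correct, and it is worth noting at the outset that the paper itself contains no proof of this proposition: it is quoted from \cite{BER99} (Proposition 4.1.18 there), so there is no internal proof to compare against. Your proof is the natural rescaling argument for that result, and all the essential points are in place: the decomposition $u=P(x)t+A(x)y+Q(x,t,y)$ with $Q$ of second order in $(t,y)$; the bookkeeping that each monomial of $Q(x,\Delta^2\tilde t,\Delta\Theta)$ carries $\Delta^{2|\alpha|+|\beta|}$ with $2|\alpha|+|\beta|\ge 2$, so the remainder is divisible by $\Delta^2$; the passage, via the adjugate $B(x)$, to the fixed-point equation $\Theta=B(x)\bigl(\tilde\sigma-P(x)\tilde t-\tilde Q(x,\tilde t,\Theta)\bigr)$, which is holomorphic across $\{\Delta=0\}$ and has identity linearization in $\Theta$ at the origin; and the reversal, valid exactly where $\Delta(x)\neq 0$, recovering $u(x,\Delta^2\tilde t,\Delta\Theta)=\Delta^2\tilde\sigma$. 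This yields precisely the asserted identity on the asserted domain, with $\Theta(0)=0$.

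One small imprecision you should fix: you define $R(x,\delta,\tilde t,\Theta)=\delta^{-2}Q(x,\delta^2\tilde t,\delta\Theta)$, claim holomorphy for $|\delta|\le 1$, and then substitute $\delta=\Delta(x)$. Nothing in the hypotheses forces $|\Delta(x)|\le 1$ near $x=0$ (only $\Delta\not\equiv 0$ is assumed, and $|\Delta(0)|$ can be arbitrary), so as stated the substitution may leave the domain of $R$. The fix is routine and uses the same estimate you have in mind: writing $Q=\sum_{|\alpha|+|\beta|\ge 2}c_{\alpha\beta}(x)t^\alpha y^\beta$ with $|c_{\alpha\beta}(x)|\le C r^{-|\alpha|-|\beta|}$, one has
\begin{equation*}
\bigl|c_{\alpha\beta}(x)\,\delta^{2|\alpha|+|\beta|-2}\,\tilde t^{\alpha}\Theta^{\beta}\bigr|
\le \frac{C}{D^2}\Bigl(\frac{D^2|\tilde t|}{r}\Bigr)^{|\alpha|}\Bigl(\frac{D|\Theta|}{r}\Bigr)^{|\beta|}
\qquad\text{for } |\delta|\le D,\ D\ge 1,
\end{equation*}
so $R$ is holomorphic on $\{|x|<r\}\times\{|\delta|<D\}$ times a polydisc in $(\tilde t,\Theta)$ shrunk by factors $D^2$ and $D$, for any $D$. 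Choosing $D>\sup_{|x|<r'}|\Delta(x)|$ (finite after shrinking the $x$-neighborhood) legitimizes $\tilde Q(x,\tilde t,\Theta):=R(x,\Delta(x),\tilde t,\Theta)$. With that adjustment the proof is complete.
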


\section{Universal meromorphic parametrization of CR maps}\label{s:meromorphicparam}
The goal of this section is to prove a very precise universal meromorphic parametrization property for germs of  CR maps from generic real-analytic CR submanifolds into spheres. The exact statement is provided by Proposition
\ref{p:leader} below.  We will divide the proof of such a proposition into two steps. The first step involves the use of reflection type methods combined with  ideas from \cite{Z99} and \cite{LM17}. It aims at obtaining a universal meromorphic identity for germs of CR maps  on the iterated complexication $\6M^2$   (Proposition \ref{pro:step1}). Then, in the second step which is 
more in the spirit of \cite{BER99, BRZ01}, we iterate  such an identity on the iterated complexifications $\6M^\kappa$ for large enough $\kappa$, and use the minimality criterion Theorem \ref{t:criterion} together with Proposition \ref{pro:BER} to lift the meromorphic identity from the iterated complexication to the ambient space $\C^N$. This step requires careful analysis as our goal will be to reach a (meromorphic) parametrization property for germs of CR maps indicating the dependence on the base point where each germ is defined.

\subsection{Reflection}\label{ss:reflection}

We use the notation previously introduced in \S \ref{s:notation}.  We have the following result.

\begin{Pro}\label{pro:step1}
Let $M\subset \C^N$ be a germ of a generic real-analytic submanifold  at the origin. Then, shrinking the neighborhood $U$ if necessary, there exists a $\C^r$-valued holomorphic map $A(Z,\zeta^1,Z^1)$, for some integer $r\geq 1$, depending only on $M$, defined  on $U \times U \times U$, and two finite collections of (universal) holomorphic polynomial maps $P_1,\ldots,P_J$, $D_1,\ldots,D_J$, such that for every germ of a holomorphic map $f\colon (\C^N,0)\to \C^{N'}$ with $f(M)\subset \4S^{2N'-1}$, there exists  $1\leq j_0\leq J$, such that  
\begin{equation}\label{e:double}
f(Z)= \frac{
	P_{j_0}(A(Z,\zeta^1,Z^1), (\partial^\mu \bar f(\zeta^1), \partial^\mu f(Z^1))_{|\mu|\leq N'})
}{
	D_{j_0}(A(Z,\zeta^1,Z^1), (\partial^\mu \bar f(\zeta^1), \partial^\mu f(Z^1))_{|\mu|\leq N'})
}
\end{equation}
and 
$
	D_{j_0}(
		A(Z,\zeta^1,Z^1), (\partial^\mu \bar f(\zeta^1), \partial^\mu f(Z^1))_{|\mu|\leq N'}
	)\not \equiv 0
$ for all $(Z,\zeta^1,Z^1)\in \6M^2$ sufficiently close to the origin.
\end{Pro}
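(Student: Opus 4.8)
The plan is to start from the defining equation of the target sphere. Writing $\4S^{2N'-1}=\{w:\sum_{k=1}^{N'}w_k\bar w_k=1\}$, the hypothesis $f(M)\subset\4S^{2N'-1}$ reads $\sum_{k=1}^{N'}f_k(Z)\overline{f_k(Z)}=1$ for $Z\in M$, which complexifies to the identity $\sum_{k=1}^{N'}f_k(Z)\,\bar f_k(\zeta^1)=1$ on $\6M$ (taking $\zeta=\zeta^1$), where $\bar f$ denotes the conjugate-coefficient series. First I would differentiate this relation by the vector fields $\6L_1,\dots,\6L_n$ of \eqref{e:Lj}, which are tangent to $\6M$ and annihilate the holomorphic factor $f(Z)$. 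Applying $\6L^\alpha$ for $\alpha\in\N^n$ yields the linear system $\sum_{k=1}^{N'}(\6L^\alpha\bar f_k)(\zeta^1)\,f_k(Z)=\delta_{\alpha,0}$ on $\6M$. By the Leibniz and chain rules, each coefficient $\6L^\alpha\bar f_k(\zeta^1)$ is a universal polynomial in the jet $(\partial^\mu\bar f(\zeta^1))_{|\mu|\le|\alpha|}$ whose coefficients are holomorphic functions of $(Z,\zeta^1)$ built solely from the $\bar Q_{\chi_j}$, hence depend only on $M$; I would collect all of this $M$-data into the single map $A$.

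I would then read the above as an (a priori infinite) linear system for the unknown vector $f(Z)\in\CNp$, whose coefficient matrix has rows $\6L^\alpha\bar f(\zeta^1)$. Let $e$ be the generic rank of the family $\{\6L^\alpha\bar f(\zeta^1)\}_\alpha$ of vectors in $\CNp$. A standard stabilization argument for the increasing flag $\mathrm{span}\{\6L^\alpha\bar f(\zeta^1):|\alpha|\le\ell\}$ — each strict increase raises the dimension by at least one, the dimension is capped at $N'$, and the flag, once it stops growing, stays constant — shows that this generic rank is already attained for $|\alpha|\le N'$; this is precisely why jets of order $\le N'$ suffice. When $e=N'$, i.e. $f$ is nondegenerate, I would select $N'$ multi-indices with $|\alpha|\le N'$ making the corresponding $N'\times N'$ minor of the coefficient matrix non-identically-zero on $\6M$, and solve for $f(Z)$ by Cramer's rule, obtaining $f(Z)=P/D$ with $P,D$ universal polynomials in $A$ and in $(\partial^\mu\bar f(\zeta^1))_{|\mu|\le N'}$ and $D\not\equiv0$.

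The genuine difficulty, and where I expect the main obstacle to lie, is that $f$ need not be nondegenerate: when $e<N'$ every such minor vanishes, and the jets of $\bar f$ at $\zeta^1$ pin down $f(Z)$ only modulo a proper subspace. This is exactly where the second point $Z^1$ of $\6M^2$ is needed. A vanishing column-combination of the coefficient matrix corresponds to a constant linear relation $\sum_k c_k\bar f_k\equiv\text{const}$, i.e. the image of $f$ lies in a proper affine subspace of $\CNp$. The "independent" components of $f(Z)$ would then be recovered by an $e\times e$ Cramer minor in the $\zeta^1$-jets, while the "dependent" ones would be reconstructed from the affine relations, whose directional coefficients come from the kernel of the coefficient matrix (computable from the $\zeta^1$-jets) and whose affine constant is evaluated at the reference point $f(Z^1)=\partial^0 f(Z^1)$ of the affine hull. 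Assembling both contributions gives $f(Z)=P_{j_0}/D_{j_0}$ with $D_{j_0}$ an $e\times e$ minor, hence $\not\equiv0$ on $\6M$, and therefore on $\6M^2$, since it is independent of $Z^1$ and $\6M^2$ surjects onto $\6M$.

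Finally, I would produce the finite collections $P_1,\dots,P_J$ and $D_1,\dots,D_J$ once and for all by enumerating every choice of a set of at most $N'$ multi-indices (with $|\alpha|\le N'$) together with a set of component indices that could serve as an invertible minor and as affine-completion data; for each such combinatorial choice one writes down the associated Cramer numerators and denominators and affine-completion polynomials, which depend only on $M$ (through $A$) and on the prescribed jets. For any given $f$, the configuration realizing its generic rank $e$ then selects an index $j_0$ for which $D_{j_0}\not\equiv0$ and \eqref{e:double} holds. The delicate point throughout is the uniform treatment of the degenerate case: making the reconstruction of the dependent components universal (the same finite list for all $f$) and controlling the non-vanishing of the chosen denominator, which is the step where I expect the reflection ideas of \cite{Z99} and \cite{LM17} to be essential.
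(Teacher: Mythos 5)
Your opening steps---complexifying the sphere equation, applying $\6L^\alpha$, the stabilization argument showing jets of order $\le N'$ suffice, and Cramer's rule when the generic rank equals $N'$---coincide with the paper's treatment of the nondegenerate case, and you have correctly located the crux in the degenerate case $e<N'$. But the mechanism you propose there rests on a claim that is false in general: a generically nontrivial kernel of the coefficient matrix does \emph{not} yield a \emph{constant} vector $c$ with $\sum_k c_k\bar f_k\equiv \mathrm{const}$, because the kernel varies from point to point, and degeneracy does not imply that the image of $f$ lies in a proper complex affine subspace of $\C^{N'}$. Concretely, take $M=\{\Im z_1=0\}\subset\C^2$ (a generic real-analytic hypersurface, as allowed by the proposition, which assumes no minimality) and $f(z_1,z_2)=(\cos z_1,\sin z_1)$: then $f(M)\subset\4S^{3}$, the map is degenerate since $\6L\bar f\equiv 0$ (so $k_0=1<2=N'$), yet $f(M)$ is a real circle whose complex affine hull is all of $\C^2$, so no affine relation $\sum_k c_k f_k\equiv\mathrm{const}$ exists. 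Consequently your reconstruction of the ``dependent'' components---normals $c^j$ taken from the kernel, constants $\langle c^j,f(Z^1)\rangle$---has no justification: for a kernel vector $c^j(\zeta^1)$ that genuinely varies with $\zeta^1$, there is no reason why $\langle c^j(\zeta^1),f(Z)\rangle$ should be independent of $Z$, and this is precisely the step your sketch leaves unproved.

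What is missing is the key lemma the paper imports from \cite{BX15} and \cite[Proposition 4.4]{LM17}: in the degenerate case the kernel admits generators that are \emph{meromorphic in $Z$ alone}, namely maps $V^j(Z)$, $1\le j\le m$, satisfying $\sum_i V_i^j(Z)\bar f_i(\zeta)=0$ on $\6M$, normalized by $V^j_i=\delta_{i,k_0+j}$ for $i>k_0$, and given by universal Cramer ratios in the jets $\left(\6L^\alpha\bar f(\zeta)\right)_{|\alpha|\le N'}$; the nontrivial point, coming from rank stabilization, is that these ratios do not depend on $\zeta$. It is exactly this $\zeta$-independence that makes the reflection possible: conjugating the identity and evaluating along $\1{\6M}$ expresses $\overline{V^j}(\zeta^1)$ through the jets of $f$ at $Z^1$ (the paper's \eqref{e:switchbis}), so the added equations $\sum_i\overline{V^j_i}(\zeta^1)f_i(Z)=0$ involve only the data allowed in \eqref{e:double}. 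In my counterexample this works ($V^1(Z)=(-\tan z_1,1)$, which is non-constant), while your scheme cannot start. Finally, one must still prove that the augmented $N'\times N'$ system is generically invertible; in the paper this is the Claim, established by row reduction to a block-triangular matrix whose lower block is the complexified Gram matrix $\left(V^k\cdot\overline{V^\nu}\big|_M\right)_{k,\nu}$, generically invertible because the $V^k\big|_M$ are generically independent. Your sketch asserts this full rank without argument, and moreover the resulting denominator is this mixed $N'\times N'$ determinant (together with the denominators of the reflected $V^j$), not an $e\times e$ minor in the $\zeta^1$-jets alone, as you claim.
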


\begin{proof}
We start with the basic equation
\begin{equation}\label{e:basic}
\sum_{i=1}^{N'}|f_i|^2=1,
\end{equation}
that holds on $M$ sufficiently close to the origin.
We complexify  it to obtain on $\6M$:

\begin{equation}\label{e:basicplus}
\sum_{i=1}^{N'}f_i(Z)\bar f_i(\zeta)=1
\end{equation}
Applying combinations of vector fields from \eqref{e:Lj}, 
$\6L^{\alpha}=\6L_1^{\alpha_1}\ldots \6L_n^{\alpha_n}$ 
with $\alpha=(\alpha_1,\ldots,\alpha_n)$ 
and $|\alpha|\leq N'$ to \eqref{e:basicplus}, 
we obtain that for $(Z,\zeta)\in \6M$ sufficiently close to the origin:

\begin{equation}\label{e:hire}
\sum_{i=1}^{N'}f_i(Z)\6L^\alpha \bar f_i(\zeta)=0.
\end{equation}

For every $\alpha$ as above, we view $\6L^\alpha \bar f(\zeta)$ as a vector in $\C^{N'}$ (depending on $(Z,\zeta)\in \6M$). 
For $0\leq r\leq N'$, denote
by $e_r\le N'$ 
the generic rank (over  a sufficiently small neighborhood of $0$ in $\6M$) of the collection  of vectors $\6L^\alpha \bar f(\zeta)$ for $|\alpha|\leq r$. We clearly have that the sequence $e_r$, $0\leq r\leq N'$ strictly increases until it stabilizes (see e.g. \cite{La01}). Let $r_0\in \{1,\ldots,N'\}$ be defined by $e_{r_0-1}<e_{r_0}=e_{r_0+1}$ and set $k_0:=e_{r_0}$.  Even though $k_0$ depends on  the map $f$,  note that we have only finitely many choices for such an integer. 

In what follows, we assume that the so-called {\em generic degeneracy} $m:=N'-k_0>0$ (see \cite{LM17}), the simpler case $k_0=N'$ will be discussed at the end of the proof. In order to add some further equations to the system \eqref{e:hire}, we shall use arguments from \cite{BX15, LM17}.

It follows from \cite{BX15} or \cite[Proposition 4.4]{LM17} that there exists meromorphic maps 
$V^j \colon U\to \C^{N'}$, $V^j=(V^j_1, \ldots, V^j_{N'})$, $j=1,\ldots,m$, satisfying 
\begin{equation}\label{e:basket}
\sum_{i=1}^{N'}V_i^j(Z) \bar f_i(\zeta)=0
\end{equation}
for $(Z,\zeta)\in \6M \cap (U\times U)$, and such that
the matrix
$(V^1,\ldots,V^m)$ 
is of generic rank $m$. In fact, more can be said about how those maps $V^j$'s may be constructed. We explain this following the lines of \cite[Proposition 4.4]{LM17}. 

We choose $k_0$ multi-indices $\alpha^{(1)},\ldots, \alpha^{(k_0)}$ of length $\leq k_0$, with $\alpha^{(1)}=0$, such that the  generic rank of the matrix $\left (\6L^{\alpha^{(\ell)}} \bar f_j (\zeta)\right )_{1\leq j\leq N'\atop 1\leq \ell\leq k_0}$ equals $k_0$. 
Picking a generically invertible minor of size $k_0$ in this matrix, say the first minor on the top left of the matrix, we may write the desired map $V^j=(V_1^j,\ldots,V^j_{N'})$ in the form, 
$$V^j_i(Z)=\frac{P_{ij}\left (\left (\6L^{\alpha^{(\ell)}} \bar f (\zeta)\right )_{1\leq \ell  \leq k_0}\right)}{{\rm det}\left( \left (\6L^{\alpha^{(\ell)}} \bar f_j (\zeta)\right )_{1\leq j\leq k_0\atop 1\leq \ell\leq k_0}\right)},\quad  (Z,\zeta)\in \6M,$$ 
for some universal polynomials $P_{ij}$. 
Furthermore, as in \cite[Proposition 4.4]{LM17}, we have 
\begin{equation}\label{e:referee}
V_i^j(Z)=\delta_{i,k_0+j},\quad  i\geq k_0+1,\ 1\leq j\leq m,
\end{equation}
where $\delta_{i,k_0+j}$ denotes the usual Kronecker symbol. In particular, the generic rank of  
the matrix
$(V^1,\ldots,V^m)$ 
is equal to $m$. Since there are finitely many choices for the above mentioned minors, as well as for the multi-indices $\alpha^{\ell}$'s, we therefore come to the conclusion that we may write for each $j=1,\ldots,m$, 
\begin{equation}\label{e:switch}
V^j(Z)=\frac{P_j \left (\left (\6L^{\alpha} \bar f (\zeta)\right )_{|\alpha|\leq N'}\right)}{D \left (\left (\6L^{\alpha} \bar f (\zeta)\right )_{|\alpha|\leq N'}\right)},\quad (Z,\zeta)\in \6M, 
\end{equation}
where $P_j$ and $D$ belong to a finite family of universal polynomial maps (with real coefficients) and $D \left (\left (\6L^{\alpha} \bar f (\zeta)\right )_{|\alpha|\leq N'}\right)\not \equiv 0$ for $(Z,\zeta)\in \6M$. Now we note that for $j=1,\ldots, m$, \eqref{e:switch} may be rewritten as follows

\begin{equation}\label{e:switchbis}
\overline V^j(\zeta)=\frac{P_j \left (\left (\6T^{\alpha}  f (Z^1)\right )_{|\alpha|\leq N'}\right)}{D \left (\left (\6T^{\alpha}  f (Z^1)\right )_{|\alpha|\leq N'}\right)},
\quad 
(\zeta, Z^1)\in \1{\6M}, 
\end{equation}
where we write $Z^1=(z^1,w^1)\in \C^n \times \C^d$ and 
\begin{equation}\label{e:Tj}
\6T_r:=\frac{\partial}{\partial z^1_r}+\sum_{\nu=1}^d Q_{z_r}^d(z^1,\chi,\tau) \frac{\partial }{\partial w^1_\nu},\quad r=1,\ldots,n.
\end{equation}

Conjugating \eqref{e:basket} and adding it to the system of 
\eqref{e:basicplus} and \eqref{e:hire}, 
we obtain the following system of equations on $\6M$ (with meromorphic coefficients):

\begin{equation}\label{e:system}
\begin{cases}
\sum_{i=1}^{N'}f_i(Z)\, \6L^{\alpha^{(1)}} \bar f_i(\zeta)&=1\\
\sum_{i=1}^{N'}f_i(Z)\, \6L^{\alpha^{(\ell)}} \bar f_i(\zeta)&=0,\ 2\leq \ell \leq k_0,\\
\sum_{i=1}^{N'}\overline{V}_i^j(\zeta)f_i(Z)&=0,\ j=1,\ldots,m.
\end{cases}
\end{equation}
We now claim the following:

{\bf Claim.} {\em The matrix $\6B(Z,\zeta)$ formed with the column vectors $\6L^{\alpha^{(1)}}\bar f (\zeta),\ldots, \6L^{\alpha^{(k_0)}}\bar f (\zeta),\overline{V}^1(\zeta),\ldots,\overline{V}^m(\zeta)$ has generic rank $N'$ (over $\6M$).}

\medskip

Let us prove the claim following the arguments of \cite[Theorem 5.2]{LM17}. Using \eqref{e:basket} and \eqref{e:referee}, we have for $(Z,\zeta)\in \6M$ near the origin
\begin{equation}
\bar f_{k_0+j}(\zeta)=-\sum_{i=1}^{k_0}V_{i}^j(Z)\bar f_i(\zeta),\quad 1\leq j\leq m,
\end{equation}
and hence
\begin{equation}\label{e:livre}
\6L^{\alpha^{(\ell)}}\bar f_{k_0+j}(\zeta)=-\sum_{i=1}^{k_0}V_i^j(Z)\, \6L^{\alpha^{(\ell)}}\bar f_i(\zeta), \quad 1\leq j\leq m,\quad 1\leq \ell \leq k_0.
\end{equation}
For every  $1\leq \nu \leq N'$,  denote by $R_\nu$ the $\nu$-th row of the matrix $\6B$.  Substituting, for every such $k_0+1\leq \nu \leq N'$, $R_\nu$ by $R_\nu+\sum_{i=1}^{k_0}V_i^{\nu-k_0}(Z)R_i$, we obtain, in view of \eqref{e:livre} a matrix of the form

\[
\sbox0{$\begin{matrix}\6L^{\alpha^{(1)}}\bar f_1(\zeta)&\ldots&\6L^{\alpha^{(k_0)}}\bar f_1(\zeta)\\ \vdots &&\vdots \\
 \6L^{\alpha^{(1)}}\bar f_{k_0}(\zeta)& \ldots &  \6L^{\alpha^{(k_0)}}\bar f_{k_0}(\zeta) \end{matrix}$}
\sbox1{$\begin{matrix} \overline{V}^1_1(\zeta)&\ldots&\overline{V}_1^m(\zeta)\\ \vdots&\ldots&\vdots\\   \overline{V}^1_{k_0}(\zeta) &\ldots& \overline{V}^m_{k_0}(\zeta)\end{matrix}$}
\left[
\begin{array}{c|c}
\usebox{0}&\usebox{1}\\
\hline
  \vphantom{\usebox{0}}\makebox[\wd0]{\large $0$}&\makebox[\wd0]{\large $C(Z,\zeta)$}
\end{array}
\right],
\]
where $C(Z,\zeta)$ is the $m\times m$ matrix given by $\left(V^k(Z)\cdot \overline{V}^\nu (\zeta)\right)_{k,\nu}$ and $V^k(Z)\cdot \overline{V}^\nu (\zeta)=\sum_{i=1}^{N'}V^k(Z)\overline{V}^\nu(\zeta)$. Hence $C(Z,\zeta)$ is simply the complexification of the Gram matrix $\left(V^k\cdot \overline{V}^\nu\big|_{M}\right)_{k,\nu}$, which is generically invertible since the vectors $V^k\big|_M$, $k=1,\ldots,m$, are generically linearly independent (near $0$). The proof of the claim is complete.

Thanks to the claim, we may now finish the proof of the proposition and  solve the linear system of equations \eqref{e:system} (in the $f$'s) using Cramer's rule and obtain that for $(Z,\zeta)\in \6M$
\begin{equation}\label{e:nil}
f(Z)=\frac{\widetilde P\left (\left (\6L^{\alpha} \bar f (\zeta)\right )_{|\alpha|\leq N'}, \overline{V}(\zeta) \right )}{\widetilde D \left (\left (\6L^{\alpha} \bar f (\zeta)\right )_{|\alpha|\leq N'},\overline{V}(\zeta)\right)}
\end{equation}
where $\widetilde P$ and $\widetilde D$ are, respectively, universal polynomial $\C^{N'}$-valued and $\C$-valued maps, depending on the map $f$, but belonging to a {\em finite} collection of universal polynomial maps. Now substituting \eqref{e:switchbis} into \eqref{e:nil} yields that we may write 

\begin{equation}\label{e:nilbis}
f(Z)=\frac{\widehat P\left (\left (\6L^{\alpha} \bar f (\zeta)\right )_{|\alpha|\leq N'}, \left (\6T^{\alpha}  f (Z^1)\right )_{|\alpha|\leq N'} \right )}{\widehat D  \left (\left (\6L^{\alpha} \bar f (\zeta)\right )_{|\alpha|\leq N'}, \left (\6T^{\alpha}  f (Z^1)\right )_{|\alpha|\leq N'} \right)}
\end{equation}
for $(Z^1,\zeta,Z)\in \6M^2$ sufficiently close to the origin and where $\widehat P$ and $\widehat D$ belonging to some finite collection of universal polynomial maps. Using the form of the vector fields $\6L_j$'s and $\6T_r$'s given by \eqref{e:Lj} and \eqref{e:Tj},  we get the desired statement of the proposition.

To complete the proof of the proposition, we must tackle the case where $k_0=N'$. In that case, we can directly apply Cramer's rule to the system of equations given by \eqref{e:basicplus} and \eqref{e:hire} and reach a similar conclusion as the one obtained when $k_0<N'$. We leave the details to the reader. The proof of the proposition is complete.
\end{proof}

\begin{Rem} 
\begin{enumerate}
\item[{\rm (a)}] We note that the map $A$ in the right hand side of \eqref{e:double} is defined in the fixed neighborhood $U\times U\times U$ of $0$ where $U$ is given as in \S \ref{s:notation}.
\item[{\rm (b)}] In Proposition \ref{pro:step1} as well as in further propositions below, we obtain the existence of universal polynomial maps satisfying certain properties. Universality means that the polynomial maps are independent of the given manifold $M$ (and of any chosen point and neighborhood there) as well as independent of all the germs of CR maps under consideration.
\end{enumerate}
\end{Rem}

\subsection{Iteration}\label{ss:iteration}

Our next goal is to get  a similar identity as the one in \eqref{e:double}, but on the iterated complexification of any order instead.

Differentiating \eqref{e:double} and using the chain rule, one  easily gets the following statement:

\begin{Pro}\label{e:derivatives}
Let $M$ and $U$ be as above, $A$ and $P_j,D_j$, $1\leq j\leq J$ be given by Proposition \ref{pro:step1}. Then for every multi-index $\gamma \in \N^N$, there exists a holomorphic map $A^{(\gamma)}(Z,\zeta^1,Z^1)$, depending only on $M$, defined on $U\times U\times U$, and for every $1\leq j\leq J$,  a (universal) holomorphic polynomial map $P^{(\gamma)}_j$, depending only on  $P_j$, $D_j$ and $\gamma$, such that  for every germ of a holomorphic map $f\colon (\C^N,0)\to \C^{N'}$ with $f(M)\subset \4S^{2N'-1}$, there exists  $1\leq j_0\leq J$, such that for every $\gamma \in \N^N$, 
\begin{equation}\label{e:doublederivative}
(\partial^\gamma f)(Z)= \frac{
	P_{j_0}^{(\gamma)}
		\left(
			A^{(\gamma)}(Z,\zeta^1,Z^1), 
			(\partial^\mu \bar f(\zeta^1), \partial^\mu f(Z^1))_{|\mu|\leq N'+|\gamma|}
		\right)}{
	\left (
		D_{j_0} \left (
			A(Z,\zeta^1,Z^1), 
			(\partial^\mu \bar f(\zeta^1), 
			\partial^\mu f(Z^1))_{|\mu|\leq N'}
		\right )\right )^{2|\gamma|}
},
\end{equation}
for all $(Z,\zeta^1,Z^1)\in \6M^2$ sufficiently close to the origin.
\end{Pro}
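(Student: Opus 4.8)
The plan is to induct on $|\gamma|$, the base case $\gamma=0$ being \eqref{e:double} itself. Throughout I would use the normal coordinates $Z=(z,w)\in\C^n\times\C^d$ of \eqref{e:define} and write $\zeta^1=(\chi^1,\tau^1)$; near $0$ the variety $\6M^2$ is parametrized holomorphically by the free variables $(z,\chi^1,\tau^1,z^1)$ through
\[
Z=(z,Q(z,\chi^1,\tau^1)),\qquad Z^1=(z^1,Q(z^1,\chi^1,\tau^1)),
\]
the slots $w$ and $w^1$ being fixed by $(Z,\zeta^1)\in\6M$ and $(\zeta^1,Z^1)\in\1{\6M}$. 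The point to keep in mind is that on $\6M^2$ the variable $Z$ is confined to a Segre variety, so one may not simply apply the ambient operator $\partial^\gamma_Z$ to \eqref{e:double}: the transverse (here, $w$-) derivatives of $f$ are invisible to such a differentiation and must be reconstructed from differentiation taken along $\6M^2$.

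First I would restrict \eqref{e:double} to this parametrization, obtaining an identity whose left-hand side is $f(z,Q(z,\chi^1,\tau^1))$ and whose right-hand side is a meromorphic function of $(z,\chi^1,\tau^1,z^1)$ in which the $f$-dependence sits only in the reflected jets $\partial^\mu\bar f(\zeta^1)$, $\partial^\mu f(Z^1)$ and the $M$-dependence only in $A$ and $Q$. Differentiating this identity in the free variables via the chain rule, the operator $\partial/\partial\tau^1_j$ brings down $\sum_l(\partial_{w_l}f)(Z)\,\partial_{\tau_j}Q_l$; since in normal coordinates $Q(z,0,\tau)=\tau$, the matrix $\partial_\tau Q$ equals the identity at $0$ and is invertible nearby, so I can solve for the transverse derivatives $\partial_{w_l}f$ and then read off the remaining $\partial_{z_i}f$ from $\partial/\partial z_i$. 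Iterating, every $\partial^\gamma_Z f(Z)$ (at $Z$ on the Segre variety) emerges as a polynomial in the reflected jets and in $M$-data alone — derivatives of $A$ composed with $Q$, derivatives of $Q$, and entries of $(\partial_\tau Q)^{-1}$. Each differentiation raises the order of the reflected jets by at most one, so after $|\gamma|$ steps they lie in the range $|\mu|\le N'+|\gamma|$, and by the quotient rule it multiplies the denominator by at most $D_{j_0}^2$ while contributing the nowhere-zero factor $\det(\partial_\tau Q)^{-1}$.

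To reach the precise shape of \eqref{e:doublederivative} I would then gather all the $M$-only quantities above — the relevant derivatives of $A$ and $Q$ and the entries of $(\partial_\tau Q)^{-1}$, all holomorphic on $U\times U\times U$ after a shrinking of $U$ — into a single vector-valued holomorphic map $A^{(\gamma)}(Z,\zeta^1,Z^1)$ depending only on $M$. The residual dependence on the reflected jets is then a universal polynomial $P^{(\gamma)}_{j_0}$ in $A^{(\gamma)}$ and those jets, with coefficients dictated only by $P_{j_0}$, $D_{j_0}$ and $\gamma$; clearing denominators to the common value $(D_{j_0}(\dots))^{2|\gamma|}$ — padding the numerator with a power of $D_{j_0}$ when a differentiation has produced only an odd power — gives \eqref{e:doublederivative}. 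The non-vanishing of $D_{j_0}$ on $\6M^2$ from Proposition~\ref{pro:step1} and of $\det(\partial_\tau Q)$ near $0$ ensure these expressions are defined on $\6M^2$ near the origin.

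The main obstacle — the only non-formal point — is the one flagged at the outset: because $\6M^2$ is a proper subvariety on which $Z$ moves only along a Segre variety, the $w$-derivatives of $f$ cannot be obtained by naively differentiating the right-hand side of \eqref{e:double} in $Z$, and must instead be extracted through the chain rule and the inversion of $\partial_\tau Q$. The bookkeeping that the word ``easily'' conceals is the verification that (i) all $M$-dependence can be quarantined inside the holomorphic vector $A^{(\gamma)}$, keeping $P^{(\gamma)}_j$ universal, and (ii) the power of $D_{j_0}$ never exceeds $2|\gamma|$; both are settled by the straightforward induction on $|\gamma|$ sketched above.
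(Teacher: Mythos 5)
Your argument is correct and does prove the proposition, but it takes a detour that the paper's own one-line proof (``Differentiating \eqref{e:double} and using the chain rule'') avoids, and the obstacle you flag at the outset is an artifact of your choice of free variables rather than a genuine difficulty. You parametrize $\6M^2$ by $(z,\chi^1,\tau^1,z^1)$, which indeed confines $Z$ to a single Segre variety and forces you to reconstruct the transverse derivatives $\partial_{w_l}f$ by inverting $\partial_\tau Q$ (legitimate, by normality of the coordinates, after shrinking $U$). The paper instead differentiates \eqref{e:double} in the coordinates $(Z,\chi^1,z^1)$: on $\6M^2$ one has $\tau^1=\bar Q(\chi^1,Z)$ and $w^1=Q(z^1,\chi^1,\tau^1)$, so the projection $(Z,\zeta^1,Z^1)\mapsto (Z,\chi^1,z^1)$ is a biholomorphism of a neighborhood of $0$ in $\6M^2$ onto an open subset of $\C^N\times\C^n\times\C^n$ in which $Z$ varies \emph{freely}. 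Applying $\partial^\gamma_Z$ to \eqref{e:double} in these coordinates is therefore perfectly legitimate: the left-hand side is the genuine ambient derivative $(\partial^\gamma f)(Z)$, the chain rule on the right produces only derivatives of $A$, $Q$ and $\bar Q$ (all $M$-data, absorbed into $A^{(\gamma)}$ and holomorphic on the original $U^3$ with no further shrinking) together with reflected jets of order at most $N'+|\gamma|$, and the quotient rule yields the denominator $D_{j_0}^{2|\gamma|}$ after the same padding you use. So your assertion that ``one may not simply apply the ambient operator $\partial^\gamma_Z$'' is true only when $(\chi^1,\tau^1)$ is frozen; freezing $(\chi^1,z^1)$ instead makes the direct differentiation valid, with no matrix inversion and no extra $M$-data beyond derivatives of $A$, $Q$, $\bar Q$. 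Both routes give the statement with the same quantifier structure (the index $j_0$ and the jet bounds are inherited from \eqref{e:double}), so your proof stands; it is simply the more laborious of the two, and its motivating claim about the failure of the direct approach should be deleted or corrected.
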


Now iterating Proposition \ref{e:derivatives} along the iterated complexification yields the following statement.

\begin{Pro}\label{pro:iteratedcomplex}
Let $M$ and $U$ be as above and let $P_j,D_j$, $1\leq j\leq J$, be given by Proposition \ref{pro:step1}. Fix an integer $\ell \geq 1$. Then there exists a holomorphic map $A_\ell (Z,\zeta^1,Z^1,\ldots,Z^\ell,\zeta^{\ell+1})$, depending only on $M$, defined  on $U^{2\ell+2}$, and for every $0\leq j\leq J$,  (universal) holomorphic polynomial maps $P_{j,\ell}$ and $D_{j,\ell}$, depending only on $P_j$, $D_j$ and $\ell$, such that  for every germ of a holomorphic map $f\colon (\C^N,0)\to \C^{N'}$ with $f(M)\subset \4S^{2N'-1}$, there exists  $1\leq j_0\leq J$, 
\begin{equation}\label{e:triplederivative}
f(Z)= \frac{P_{j_0,\ell} \left (A_\ell (Z,\zeta^1,Z^1,\ldots, Z^\ell,\zeta^{\ell+1}), (\partial^\mu \bar f(\zeta^{\ell+1}), \partial^\mu f(Z^{\ell}))_{|\mu|\leq 2\ell N'} \right )}{
	D_{j_0,\ell}
		\left (
			A_\ell	
				(Z,\zeta^1,Z^1,\ldots, Z^\ell, \zeta^{\ell+1}), 
				(\partial^\mu \bar f(\zeta^{\ell+1}), \partial^\mu f(Z^{\ell}))_{|\mu| \leq 2\ell N'}
		\right )},
\end{equation}
for all $(Z,\zeta^1,Z^1,\ldots, Z^\ell, \zeta^{\ell+1})\in \6M^{2\ell+1}$ sufficiently close to the origin, and the denominator in \eqref{e:triplederivative} does not vanish identically on $\6M^{2\ell+1}$. In particular, we have the following representation:
\begin{equation}\label{e:elect}
f(Z)= \frac{P_{j_0,\ell} \left (
	A_\ell (Z,\zeta^1,Z^1,\ldots, Z^{\ell -1}, \zeta^{\ell}, p,\bar p), (\partial^\mu \bar f(\bar p), \partial^\mu f(p))_{|\mu|\leq 2\ell N'} \right )}{
	D_{j_0,\ell}\left (
		A_\ell(Z,\zeta^1,Z^1,\ldots, Z^{\ell-1}, \zeta^{\ell},p,\bar p), 
		(\partial^\mu \bar f(\bar p), \partial^\mu f(p))_{|\mu| \leq 2 \ell N'}
	\right )},
\end{equation}
for all $(Z,\zeta^1,Z^1,\ldots, Z^{\ell-1}, \zeta^{\ell},p)\in \6N^{\ell}$ (given by \eqref{e:Nl}) sufficiently close to the origin, and the denominator in \eqref{e:elect} does not vanish identically (on $\6N^\ell$).
\end{Pro}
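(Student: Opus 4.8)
\emph{Approach.} The plan is to establish \eqref{e:triplederivative} by induction on $\ell$, feeding the differentiated reflection identity of Proposition \ref{e:derivatives} and its complex conjugate into one another along the chain of vertices defining $\6M^{2\ell+1}$. Since the unit sphere is invariant under the conjugation $w\mapsto \bar w$, if $f$ satisfies the hypotheses then so does $\bar f$; applying Proposition \ref{e:derivatives} to $\bar f$ and relabelling variables therefore yields a \emph{conjugate} identity expressing each $\partial^\mu\bar f(\zeta^k)$ as a rational function of the jets $(\partial^\nu f(Z^k),\partial^\nu\bar f(\zeta^{k+1}))_{|\nu|\le N'+|\mu|}$ on any triple with $(\zeta^k,Z^k)\in\1{\6M}$ and $(Z^k,\zeta^{k+1})\in\6M$, with coefficients assembled from the conjugates $\bar A^{(\mu)},\bar P^{(\mu)}_j,\bar D_j$. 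These conjugates again depend only on $M$ and are universal, so after enlarging the finite families $\{P_j,D_j\}$ to be closed under conjugation all indices occurring below remain in $\{1,\dots,J\}$.

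\emph{Alternating substitution.} The two identities act as reflection operators: Proposition \ref{e:derivatives} pushes a jet of $f$ at a vertex $Z^k$ to jets of $\bar f$ at $\zeta^{k+1}$ and of $f$ at $Z^{k+1}$ (using $(Z^k,\zeta^{k+1},Z^{k+1})\in\6M^2$), while its conjugate pushes a jet of $\bar f$ at $\zeta^k$ to jets of $f$ at $Z^k$ and of $\bar f$ at $\zeta^{k+1}$. Starting from Proposition \ref{pro:step1}, which expresses $f(Z)$ through jets at $\zeta^1,Z^1$ on $\6M^2$, I would alternately apply the conjugate operator to the current $\zeta$-jets and the operator to the current $Z$-jets; each application moves the shallower of the two reference vertices one step deeper into the chain and raises the required jet order by $N'$. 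After $2\ell-1$ such substitutions the reference vertices become the two deepest ones, $Z^\ell$ and $\zeta^{\ell+1}$, the ambient set is $\6M^{2\ell+1}$, and the jet order has grown to $2\ell N'$, which is exactly the bound in \eqref{e:triplederivative}. The index $j_0$ furnished by Proposition \ref{pro:step1} for $f$ is used unchanged at every vertex, since \eqref{e:double} is a single identity valid for all triples near the origin and each substitution merely reads it with shifted arguments; the composite coefficient map $A_\ell(Z,\zeta^1,Z^1,\dots,Z^\ell,\zeta^{\ell+1})$ is obtained by collecting the finitely many values $A^{(\gamma)}$ and $\bar A^{(\gamma)}$ (over $|\gamma|\le 2\ell N'$) read off along the chain, and is hence holomorphic on $U^{2\ell+2}$ and depends only on $M$. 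I would organise this as a clean induction on $\ell$, each increment advancing the two reference vertices by one position apiece and keeping $P_{j,\ell},D_{j,\ell}$ within the finite universal family.

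\emph{Non-vanishing of the denominator.} The delicate point — and the step I expect to be the main obstacle — is controlling the denominators. Substituting rational expressions into the universal polynomials and clearing denominators exhibits $D_{j_0,\ell}$ as a finite product of powers of the factors $D_{j_0}$ and $\bar D_{j_0}$ evaluated along the chain, each of which, by Proposition \ref{pro:step1}, is not identically zero on the corresponding $\6M^2$-slice. I would then argue that the projection $\6M^{2\ell+1}\to\6M^2$ onto any consecutive triple is dominant (its fibres are cut out by the graph equations $w=Q(z,\bar z,\bar w)$ and are nonempty), so that each factor, being the pullback of a not-identically-zero holomorphic function, is itself not identically zero on $\6M^{2\ell+1}$. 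As the germ of $\6M^{2\ell+1}$ at the origin is an irreducible complex manifold, the finite product $D_{j_0,\ell}$ is then not identically zero there, and clearing denominators turns \eqref{e:triplederivative} into a polynomial identity valid on all of $\6M^{2\ell+1}$.

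\emph{Passage to $\6N^\ell$.} Finally, to obtain \eqref{e:elect} I would restrict to $\6N^\ell\subset\6M^{2\ell+1}$ (see \eqref{e:Nl}) by setting $Z^\ell=p$ and $\zeta^{\ell+1}=\bar p$ with $p\in M$; this is legitimate since $(p,\bar p)\in\6M$, and then $f(Z^\ell)=f(p)$, $\bar f(\zeta^{\ell+1})=\bar f(\bar p)$. The non-vanishing of $D_{j_0,\ell}$ persists on $\6N^\ell$ because $\6N^\ell$ is a generic real-analytic submanifold of $\6M^{2\ell+1}$ and hence a uniqueness set for holomorphic functions; verifying this genericity — equivalently, checking directly that $D_{j_0,\ell}\circ\Xi\not\equiv0$ for the parametrization $\Xi$ of \eqref{e:param}, which traces back to the generic rank $k_0$ being attained on a dense subset of $M$ — is the one place where a short separate argument is needed.
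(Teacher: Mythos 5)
Your proposal is correct and follows essentially the same route as the paper: the paper likewise starts from Proposition \ref{pro:step1}, substitutes the complex-conjugated form of the differentiated identity \eqref{e:doublederivative} (the paper's \eqref{e:barstuff}) along the chain of the iterated complexification, and derives \eqref{e:elect} from the fact that $\6N^\ell$ is a uniqueness set for holomorphic functions on $\6M^{2\ell+1}$. The denominator control and jet-order bookkeeping you spell out (clearing denominators, non-vanishing via pullback under the surjective projections onto consecutive triples, order $N'+(2\ell-1)N'=2\ell N'$) are exactly what the paper leaves implicit in its phrase ``the same type of arguments.''
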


\begin{proof} The proof consists of a systematic use of Proposition \ref{e:derivatives}. 

Let $f$ be as in the proposition. Applying Proposition \ref{pro:step1}, we have for some $1\leq j_0\leq J$, 
\begin{equation}\label{e:weekend}
f(Z)= \frac{P_{j_0}(A(Z,\zeta^1,Z^1), (\partial^\mu \bar f(\zeta^1), \partial^\mu f(Z^1))_{|\mu|\leq N'})}{
	D_{j_0}(
		A(Z,\zeta^1,Z^1), (\partial^\mu \bar f(\zeta^1), \partial^\mu f(Z^1))_{|\mu|\leq N'}
	)},
\end{equation}
with a non-vanishing denominator on $\6M^2$. Now applying Proposition \ref{e:derivatives} and taking the complex conjugate of \eqref{e:doublederivative}, we have for every multi-index $\mu \in \N^N$, and for every $(Z^1,\zeta^2)\in \6M$ 
and 
$(\zeta^1, Z^1)\in \1{\6M}$
sufficiently close to the origin, 
\begin{equation}\label{e:barstuff}
(\partial^\mu \bar f)(\zeta^1)= \frac{\overline{P_{j_0}^{(\gamma)}}\left (\overline{A^{(\gamma)}}(\zeta^1,Z^1,\zeta^2), (\partial^\nu  f(Z^1), \partial^\nu \bar f(\zeta^2))_{|\nu|\leq N'+|\mu|} \right)}{
	\left (\overline{D_{j_0}} \left(
		\bar A(\zeta^1,Z^1,\zeta^2), (\partial^\nu  f(Z^1), \partial^\nu \bar f(\zeta^2))_{|\nu|\leq N'}
		\right) \right )^{2|\mu|}}.
\end{equation}
Substituting \eqref{e:barstuff} into \eqref{e:weekend} immediately gives the \eqref{e:triplederivative} for $\ell =1$. The general case of \eqref{e:triplederivative} for arbitrary $\ell$ follows from the same type of arguments. 

The last statement of the  proposition follows from the fact that  the submanifold $\6N^\ell$ is a uniqueness set for holomorphic functions on $\6M^{2\ell+1}$.  The proof of the proposition is complete now.
\end{proof}


Using the iterated Segre maps as recalled in \S \ref{s:notation}, we now reach the following. 


\begin{Pro}\label{pro:iteratedbis}
Let $M\subset \C^N$ be a generic real-analytic minimal submanifold through the origin and let $s\in \Z_+$ be as Theorem \ref{t:criterion}.  Then there exists a holomorphic map $\Phi (t^0,\ldots,t^{2s-1},\lambda, \omega)$, depending only on $M$, defined  on some fixed neighborhood of $0\in \C^{2sn}\times \C_\lambda^N \times \C_\omega^N$, and two finite collections of (universal) holomorphic polynomial maps $\widetilde P_1,\ldots,\widetilde P_J$, $\widetilde D_1,\ldots,\widetilde D_J$,  such that  for every germ of a holomorphic map $f\colon (\C^N,0)\to \C^{N'}$ with $f(M)\subset \4S^{2N'-1}$, there exists  $j_0\in \{1,\ldots,J\}$ such that 
\begin{equation}\label{e:runaway}
(f\circ v^{2s})(t^0,\ldots,t^{2s-1};p)= \frac{\widetilde P_{j_0}\left (\Phi(t^0,\ldots,t^{2s-1},p,\bar p), (\partial^\mu \bar f(\bar p), \partial^\mu f(p))_{|\mu|\leq 2sN'} \right )}{
	\widetilde D_{j_0}\left (
		\Phi (t^0,\ldots,t^{2s-1},p,\bar p), 
		(\partial^\mu \bar f(\bar p), \partial^\mu f(p))_{|\mu| \leq 2sN'}
	 \right )},
\end{equation}
for all  $(t^0,\ldots,t^{2s-1})\in \C^{2sn}$ and $p\in M$ sufficiently close to $0$, with  the denominator in \eqref{e:runaway}  not vanishing identically. 
\end{Pro}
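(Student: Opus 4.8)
The plan is to specialize Proposition~\ref{pro:iteratedcomplex} to the value $\ell=s$ furnished by Theorem~\ref{t:criterion}, and then to precompose the resulting identity \eqref{e:elect} with the Segre parametrization $\Xi$ of $\6N^s$ given by \eqref{e:param} (taking $\kappa=s$ there). Indeed, applying Proposition~\ref{pro:iteratedcomplex} with $\ell=s$ yields, for some $1\le j_0\le J$ depending on $f$, the representation \eqref{e:elect} of $f(Z)$ on $\6N^s$ in terms of $A_s(Z,\zeta^1,Z^1,\ldots,Z^{s-1},\zeta^s,p,\bar p)$ and of the jets $(\partial^\mu \bar f(\bar p),\partial^\mu f(p))_{|\mu|\le 2sN'}$, with denominator $D_{j_0,s}$ not vanishing identically on $\6N^s$. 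Since the first component of $\Xi(t^0,\ldots,t^{2s-1},p)$ is precisely $v^{2s}(t^0,\ldots,t^{2s-1};p)$, substituting the point $\Xi(t^0,\ldots,t^{2s-1},p)\in\6N^s$ into \eqref{e:elect} turns the left-hand side into $(f\circ v^{2s})(t^0,\ldots,t^{2s-1};p)$, while leaving the jet arguments untouched, because the last two slots of $\Xi$ are exactly $(p,\bar p)$. I would then simply set $\widetilde P_j:=P_{j,s}$ and $\widetilde D_j:=D_{j,s}$, which form finite collections of universal polynomial maps depending only on $j$ and $s$.

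It remains to read off the map $\Phi$. After the substitution, the argument $A_s(Z,\zeta^1,\ldots,\zeta^s,p,\bar p)$ appearing in the numerator and denominator of \eqref{e:elect} becomes $A_s$ evaluated along $\Xi$. Each component of $\Xi$ is an iterated Segre map $v^j$ or $\overline{v^j}$, which by the recursion \eqref{e:mid251} depends holomorphically on $(t^0,\ldots,t^{2s-1})$ together with \emph{either} $p$ \emph{or} $\bar p$ according to the parity of $j$ (an even number of conjugations leaves the base point raw, an odd number leaves its conjugate); in particular no component mixes $p$ and $\bar p$. Treating $p$ and $\bar p$ as two independent variables $\lambda,\omega\in\C^N$ therefore renders each such component, and hence $A_s$ composed with them, holomorphic in $(t^0,\ldots,t^{2s-1},\lambda,\omega)$ near $0$. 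I would thus define
\[
\Phi(t^0,\ldots,t^{2s-1},\lambda,\omega):=A_s\big(\Xi^{\C}(t^0,\ldots,t^{2s-1},\lambda,\omega)\big),
\]
where $\Xi^{\C}$ is obtained from $\Xi$ by replacing the base point $p$ wherever it occurs holomorphically by $\lambda$ and its conjugate $\bar p$ by $\omega$. Being a composition of the holomorphic map $A_s$ (defined on $U^{2s+2}$ by Proposition~\ref{pro:iteratedcomplex}) with the complexified Segre maps, $\Phi$ is holomorphic near $0$ in $\C^{2sn}\times\C^N_\lambda\times\C^N_\omega$ and depends only on $M$. Specializing back to $\omega=\bar p$ with $p\in M$ then produces exactly \eqref{e:runaway}.

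Finally, the non-vanishing of the denominator in \eqref{e:runaway} follows from the fact that $\Xi$ parametrizes the germ of $\6N^s$: since the holomorphic function $D_{j_0,s}$ does not vanish identically on $\6N^s$, its zero set meets $\6N^s$ in a nowhere dense subset, so there is a point $q\in\6N^s$ with $D_{j_0,s}(q)\ne0$; writing $q=\Xi(t^0,\ldots,t^{2s-1},p)$ shows that $\widetilde D_{j_0}(\Phi(t^0,\ldots,t^{2s-1},p,\bar p),(\partial^\mu\bar f(\bar p),\partial^\mu f(p)))$ is not identically zero in $(t^0,\ldots,t^{2s-1},p)$. The step I expect to require the most care is the bookkeeping of the previous paragraph: one must verify that splitting the base point into the independent holomorphic variables $\lambda,\omega$ genuinely yields a holomorphic $\Phi$ (i.e.\ that no entry of $\Xi$ depends anti-holomorphically on $p$ through a component that also depends holomorphically on it), and that precomposition by $\Xi$ cannot force the denominator to vanish identically. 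Both rest on $\Xi$ being a genuine (holomorphic, upon complexification) parametrization of $\6N^s$ rather than a merely real-analytic one; granting this, the rest is a direct substitution.
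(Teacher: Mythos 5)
Your proposal is correct and takes essentially the same route as the paper, whose entire proof consists of setting $\Phi(t^0,\ldots,t^{2s-1},p,\bar p)=(A_s\circ\Xi)(t^0,\ldots,t^{2s-1},p)$ and invoking Proposition~\ref{pro:iteratedcomplex} with $\ell=s$ together with the parametrization \eqref{e:param} of $\6N^s$ — precisely your argument, with the complexification and denominator details spelled out. One immaterial slip: under the paper's convention (where $\overline{v^j}$ means conjugation of \emph{coefficients}, the base-point slot always carrying $p$), every component of $\Xi$ is holomorphic in $p$ except the final appended $\bar p$, so your parity rule for which slots carry $p$ versus $\bar p$ is not accurate; but the holomorphy of $\Phi$ in $(t,\lambda,\omega)$ holds regardless, since the real-analytic map $A_s\circ\Xi$ complexifies (polarizes) to a holomorphic map in any case.
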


\begin{proof} Setting $\Phi (t^0,\ldots,t^{2s-1},p,\bar p)=(A_s \circ \Xi)(t^0,\ldots, t^{2s-1},p)$ with $A_s$ and $\Xi$ given respectively by Proposition \ref{pro:iteratedcomplex}
 and \eqref{e:param}, we see that the result follows from Proposition \ref{pro:iteratedcomplex}.
\end{proof}

\subsection{Lifting}

The next step consists of the lifting procedure. In order to carry it out, we need a more precise version of Proposition \ref{pro:iteratedbis} since we will be considering germs of CR maps at points $q\in M$ sufficiently close to the origin. To this end, we'll be more precise in the choice of our neighborhoods.  Let $s\in \Z_+$ be the integer given as before. We assume that $M$ is given by \eqref{e:define} for $|Z|<C_0$ for some fixed $C_0>0$. We also assume that the iterated Segre map $v^{2s}(t^0,\ldots,t^{2s-1};p)$ is defined for $|t^j|<C_1$, $|p|<C_1$, $j=0,\ldots,2s-1$, with $0<C_1\leq C_0$. Now inspecting the proofs in \S \ref{ss:reflection} and \S \ref{ss:iteration}, we have the following:

\begin{Pro}\label{pro:inspection}
Let $M\subset \C^N$ be a generic real-analytic minimal submanifold through the origin. 
Then for $C_1>0$ sufficiently small, there exists a holomorphic map $\Phi (t^0,\ldots,t^{2s-1},\lambda, \omega)$, depending only on $M$, defined  for $t^j\in \C^n$, $\lambda \in \C^N$,  $\omega \in \C^N$,   $|t^j|<C_1$, $|\lambda|<C_1$, $|\omega|<C_1$, $j=0,\ldots,2s-1$, and two finite collections of (universal) holomorphic polynomial maps $\widetilde P_1,\ldots,\widetilde P_J$, $\widetilde D_1,\ldots,\widetilde D_J$,
such that  if 
$q=(z_q,w_q)\in M$ with $|q|<C_1$ and
  $f\colon (\C^N,q)\to \C^{N'}$ is a germ  of a holomorphic map with $f(M)\subset \4S^{2N'-1}$, 
  there exists $j_0\in \{1,\ldots,J\}$ such that
\begin{equation}\label{e:runawaybis}
(f\circ v^{2s})(t^0,\ldots,t^{2s-1};p)= \frac{\widetilde P_{j_0}\left (\Phi(t^0,\ldots,t^{2s-1},p,\bar p), (\partial^\mu \bar f(\bar p), \partial^\mu f(p))_{|\mu|\leq 2sN'} \right )}{
\widetilde D_{j_0}\left (\Phi (t^0,\ldots,t^{2s-1},p,\bar p), (\partial^\mu \bar f(\bar p), \partial^\mu f(p))_{|\mu| \leq 2sN'} \right )},
\end{equation}
for all  $(t^0,\ldots,t^{2s-1})\in \C^{2sn}$ and $p\in M$ sufficiently close to $(z_q,\bar z_q,\ldots, z_q,\bar z_q)$ and $q$ respectively, with the denominator in \eqref{e:runawaybis}  not vanishing identically. \end{Pro}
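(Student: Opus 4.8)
The plan is to prove Proposition \ref{pro:inspection} by inspecting the proofs of Propositions \ref{pro:step1}, \ref{e:derivatives}, \ref{pro:iteratedcomplex} and \ref{pro:iteratedbis} and observing that every object produced there — the holomorphic map $\Phi$ (depending only on $M$) and the finite collections of universal polynomial maps $\widetilde P_1,\ldots,\widetilde P_J$, $\widetilde D_1,\ldots,\widetilde D_J$ — is manufactured entirely out of the defining data of $M$ (the map $Q$, the vector fields $\6L_j$ and $\6T_r$, and the Segre maps $v^\kappa$), and is therefore in no way tied to the base point $0$ of the germ. Accordingly, I would first fix once and for all the constants $C_0$ and $C_1$ so that $M$ is given by \eqref{e:define} for $|Z|<C_0$ and $v^{2s}$ is defined for $|t^j|<C_1$, $|p|<C_1$, with $0<C_1\le C_0$; these serve as the uniform radii inside which every germ based at a point $q$ with $|q|<C_1$ will be handled.

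The core of the argument is to rerun each of the four propositions verbatim, with the base point $0$ replaced by an arbitrary $q=(z_q,w_q)\in M$, $|q|<C_1$. For a germ $f\colon(\C^N,q)\to\C^{N'}$ with $f(M)\subset\4S^{2N'-1}$, the relation $\sum_i|f_i|^2=1$ holds on $M$ near $q$, and its complexification $\sum_i f_i(Z)\bar f_i(\zeta)=1$ holds on $\6M$ near $(q,\bar q)$. Applying the vector fields $\6L^\alpha$, which are globally defined on $\6M$ over $U\times U$ and do not depend on $q$, yields \eqref{e:hire} near $(q,\bar q)$, and the entire construction of $A$, of the meromorphic vectors $V^j$, and of the universal polynomials $P_j,D_j$ in Proposition \ref{pro:step1} goes through unchanged, producing \eqref{e:double} on the germ of $\6M^2$ at $(q,\bar q,q)$. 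Differentiation (Proposition \ref{e:derivatives}), iteration along $\6M^{2\ell+1}$ (Proposition \ref{pro:iteratedcomplex}), and the substitution $\Phi=A_s\circ\Xi$ (Proposition \ref{pro:iteratedbis}) then carry over word for word, the only change being that each identity now holds on the germ of the relevant complexification at the base point attached to $q$ rather than to $0$; in particular \eqref{e:runaway} becomes \eqref{e:runawaybis}, valid for $p\in M$ near $q$ and $(t^0,\ldots,t^{2s-1})$ near the configuration $(z_q,\bar z_q,\ldots,z_q,\bar z_q)$ at which $v^{2s}$ returns to a neighborhood of $q$ (the $q$-analogue of \eqref{e:vanishing}), so that $f\circ v^{2s}$ is defined there.

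The essential new content, and the point requiring the most care, is the uniformity in $q$ of all these constructions. On one hand, the discrete data extracted along the way — the stabilized generic rank $k_0=e_{r_0}\in\{1,\ldots,N'\}$, the choice of an invertible $k_0\times k_0$ minor, and the multi-indices $\alpha^{(1)},\ldots,\alpha^{(k_0)}$ of length $\le N'$ — depend on the individual germ and thus a priori on $q$; but they range over finite sets that are themselves independent of $q$, so the single finite families $\widetilde P_j,\widetilde D_j$ cover every case at once and the correct index $j_0\in\{1,\ldots,J\}$ exists for each germ whatever its base point. The non-vanishing of the denominator holds for each individual germ at $q$ by the generic-rank-$N'$ property of the matrix $\6B$ near $(q,\bar q)$, via the Gram matrix computation of the Claim, exactly as before. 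On the other hand — and this is where the precise choice of $C_1$ enters, and what I regard as the main obstacle — one must ensure that the domains on which $\Phi$ and the identity \eqref{e:runawaybis} are valid can be taken uniformly over all $q$ with $|q|<C_1$. Since $\Phi=A_s\circ\Xi$ and all intermediate objects are holomorphic on fixed polydiscs governed by $C_0$, while the base points $(q,\bar q,q,\ldots)$ and the configurations $(z_q,\bar z_q,\ldots,z_q,\bar z_q)$ vary continuously with $q$ and stay within a controlled distance of those attached to $0$, it suffices to shrink $C_1$ relative to $C_0$ so that one common radius governs \eqref{e:runawaybis} for every such $q$. This uniform choice is precisely what upgrades the pointwise statement of Proposition \ref{pro:iteratedbis} to the base-point-uniform statement of Proposition \ref{pro:inspection}.
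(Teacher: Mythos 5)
Your proposal is correct and takes essentially the same approach as the paper: the paper's entire proof of Proposition \ref{pro:inspection} consists of fixing the radii $C_0,C_1$ and then ``inspecting the proofs'' of Propositions \ref{pro:step1}--\ref{pro:iteratedbis}, observing that $\Phi=A_s\circ\Xi$ and the finite universal polynomial collections depend only on $M$ and its fixed neighborhoods, not on the base point. Your elaboration of that inspection (the finite range of the discrete data such as $k_0$ and the chosen minors, the fixed domains of $A$, $A_s$ and the Segre maps, and the $q$-analogue of \eqref{e:vanishing} guaranteeing that $f\circ v^{2s}$ is defined near the configuration $(z_q,\bar z_q,\ldots,z_q,\bar z_q)$) is exactly the intended argument.
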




We now want to lift \eqref{e:runawaybis} to get a universal meromorphic parametrization property for CR maps as given by Proposition \ref{p:leader} below. The proof consists of a careful application of Proposition \ref{pro:BER} following in spirit some steps from \cite{BER99, BRZ01}.

In what follows, we assume that $s$ is odd, the even case, being very similar, is left  to the reader.

We introduce the following variables 
$$
	x=(x^1,\ldots,x^s)\in \C^{ns},
	\quad
	u=(u^0,\ldots,u^{s-1}) \in \C^{sn},
	\quad
	(\eta,\sigma)\in \C^n \times \C^d,
	\quad 
	\theta \in \C^n, 
	\quad
	\omega \in \C^N,
$$
 and define  holomorphic maps $L\colon (\C^{2(s+1)n},0)\to (\C^{2sn},0)$ and  $\vartheta \colon (\C^{2(s+1)n+N+d},0)\to (\C^N,0)$ as follows:

$$L(x,u,\eta,\theta):=(u^0+\eta, x^1+\theta,x^2+\eta,\ldots, x^s+\theta, u^{s-1}+x^{s-1}+\eta,\ldots, u^1+x^{1}+\theta),$$
$$\vartheta (x,u,\eta,\sigma,\theta, \omega)=v^{2s} \left (L(x,u,\eta,\theta);\omega+(\eta,\sigma) \right )-(\eta,\sigma).$$
 We choose $0<C_2<C_1$ such that $\vartheta$ is a holomorphic  map for
\begin{equation}\label{e:domain}
|x|<C_2,\, |u|<C_2, |\eta|<C_2, |\sigma|<C_2, |\omega|<C_2, |(\eta,\sigma)|<C_2, |\theta|<C_2.
\end{equation}
We also define
$$\Psi (x,u,\eta,\sigma,\theta,\omega):=\Phi \left (L(x,u,\eta,\theta);\omega+(\eta,\sigma),\overline{\omega+(\eta,\sigma)}\right ),$$
where $\Phi$ is given by Proposition \ref{pro:inspection}. 
Choosing $C_2>0$ sufficiently small, we may assume that  $\Psi$ is real-analytic on the open set given by \eqref{e:domain} and holomorphic with respect to $x,u,\theta$.

In view of \eqref{e:vanishing}, we have 
\begin{equation}\label{e:condition1}
\vartheta (x,0,0,0,0)=0,
\end{equation}
and in view of \eqref{e:rank}, we also have
\begin{equation}\label{e:rankbis}
{\rm max}\Big \{{\rm rk} \frac{\partial \vartheta}{\partial u}(x,0,0,0,0) : |x|<C_2 \Big \}=N.
\end{equation}

We write $u=(\xi,y)\in \C^{sn-N}\times \C^N$ such that 

\begin{equation}\label{e:stuff}
\Delta (x)={\rm det} \left ( \frac{\partial \vartheta}{\partial y}(x,0,0,0,0)\right)\not \equiv 0.
\end{equation}
By Proposition \ref{pro:BER}, there exists a $\C^N$-valued holomorphic map $\Upsilon (x,\xi,\eta,\sigma,\theta, \omega)$ defined for 
\begin{equation}\label{e:getit}
|x|,|\xi|,|\eta |,|\sigma|, |\theta|, |\omega|<C_3,
\end{equation}
 for some $0<C_3<C_2$, such that 
\begin{equation}\label{e:benefits}
\vartheta \left (x,\xi, \Delta (x)\, \Upsilon \left ( x, \frac{\xi}{\Delta (x)^2}, \frac{(\eta,\sigma)}{\Delta (x)^2}\frac{\theta}{\Delta (x)^2}, \frac{\omega}{\Delta (x)^2},\frac{Z^0}{\Delta (x)^2}\right ),\eta,\sigma, \theta, \omega \right )=Z^0
\end{equation}
for all $(x,\xi,\eta, \sigma,\theta,\omega,Z^0)$ belonging to the open subset $\6W$  where  \eqref{e:getit} holds,  $\Delta (x)\not =0$ and 
\begin{equation}
\left | \frac{\xi}{\Delta (x)^2} \right |, \left |\frac{(\eta,\sigma)}{\Delta (x)^2}\right |, \left |\frac{\theta}{\Delta (x)^2}\right |, \left | \frac{\omega}{\Delta (x)^2} \right |, \left |\frac{Z^0}{\Delta (x)^2}\right | < C_4
\end{equation}
for some constant $0<C_4<C_3$ (chosen in such a way that the term on the left of \eqref{e:benefits} is holomorphic on $\6W$). Reducing $C_3$ and $C_4$ further if necessary, we may assume that the map 
\begin{equation}\label{e:rift}
T(x,\xi,\eta,\sigma, \theta,\omega,Z^0):=\Psi  \left (x,\xi,  \Delta (x)\, \Upsilon \left ( x, \frac{\xi}{\Delta (x)^2}, \frac{(\eta,\sigma)}{\Delta (x)^2}\frac{\theta}{\Delta (x)^2}, \frac{\omega}{\Delta (x)^2},\frac{Z^0}{\Delta (x)^2}\right ) ,\eta,\sigma, \theta, \omega \right )
\end{equation}
is real-analytic on $\6W$ and holomorphic with respect to $(x,\xi,\theta,Z^0)$.

Pick an arbitrary relatively compact open subset $S$ of $\{x\in \C^{sn}:|x|<C_3,\ \Delta (x)\not =0\}$ and set $\delta:={\rm inf}\{\Delta (x):x\in S\}>0$. Reducing $C_4$ if necessary, we may assume that $\delta^2 C_4<C_3$.

Let $\Omega:= \left \{Z\in \C^N:|Z|< \frac{\delta^2 C_4}{2}\} \right \}$ and let  $q=(z_q,w_q)\in \Omega \cap M$ be arbitrary. Using what we have done before with $(\eta,\sigma)=(z_q,w_q)=q$ and $\theta =\bar \eta=\bar z_q$, we see that

\begin{equation}\label{e:gone}
\vartheta \left (x,\xi, \Delta (x)\, \Upsilon \left ( x, \frac{\xi}{\Delta (x)^2}, \frac{q}{\Delta (x)^2}\frac{\bar z_q}{\Delta (x)^2}, \frac{\omega}{\Delta (x)^2},\frac{Z^0}{\Delta (x)^2}\right ),q, \bar z_q, \omega \right )=Z^0
\end{equation}
for all $x\in S$, $|\xi |<\delta^2 C_4$, $|\omega |<\delta^2C_4$, and $|Z^0|<\delta^2 C_4$. Using  \eqref{e:real}, \eqref{e:mid251} and the fact that $q\in M$, we further notice that 
$$\vartheta (0,0,q,\bar z_q,0)=v^{2s}\left ( L(0,0,z_q,\bar z_q);q\right)-q=v^{2s}(z_q,\bar z_q,\ldots,\bar z_q;q)-q=0.$$

Consider now a germ of a holomorphic map $f\colon (\C^N,q)\to \C^{N'}$, sending $(M,q)$ into $\4S^{2N'-1}$. Using Proposition \ref{pro:inspection} and its notation, writing $P=\widetilde P_{j_0}$, $D=\widetilde D_{j_0}$ and using \eqref{e:runawaybis},  we have
\begin{equation}\label{e:plaire}
f(q+\vartheta (x,u,q,\bar z_q,\omega))=\frac{P\left (\Psi (x,u,q,\bar z_q,\omega)
, (\partial^\mu \bar f(\overline{\omega +q}), \partial^\mu f(\omega+q))_{|\mu|\leq 2sN'} \right )}{D\left (\Psi (x,u,q,\bar z_q,\omega), (\partial^\mu \bar f(\overline{\omega+q}), \partial^\mu f(\omega+q))_{|\mu| \leq 2sN'}
 \right )}
\end{equation}
for all $(x,u)\in \C^{2ns}$ in a sufficiently small neighborhood of $0$ (depending on $q$) and for all $\omega$ in a sufficiently small neighborhood, denoted $M^{(q)}$, of  the origin in $\C^N \cap \{\omega: \omega+q\in M\}$. Furthermore  the denominator in \eqref{e:plaire} does not vanishing identically for all the above $(x,u,\omega)$'s since the linear  map $(x,u)\mapsto L(x,u,z_q,\bar z_q)$ is  invertible.

Next we observe that the right hand side of \eqref{e:plaire} is well-defined, as a ratio, for $|x|,|u|<C_3$ and for $\omega \in M^{(q)}$. We now claim that the left-hand side is also defined and holomorphic for $|x|<C_3$ and for $|u|$ sufficiently small (depending on $q$) and for $\omega \in M^{(q)}$. This claim follows from the fact that for every $x$ with $|x|<C_3$,
\begin{equation}
\begin{aligned}
\vartheta (x,0,q,\bar z_q,0)&=v^{2s}(L(x,0,z_q,\bar z_q);q)-q\\
&=v^{2s}(z_q,x^1+\bar z_q,\ldots,x^s+\bar z_q,x^{s-1}+z_q,\ldots,x^1+\bar z_q;q)-q\\
&=q-q=0,
\end{aligned}
\end{equation}
which  itself follows from \eqref{e:real} and the fact that $q\in M$. All this implies that the equality \eqref{e:plaire} holds for all $|x|<C_3$ (which is independent of the mapping $f$ and $q$) and for all $u,\omega$ sufficiently small (depending on $f$ and $q$). Now we may use \eqref{e:gone} to get the following identity
\begin{equation}\label{e:graal}
f(q+Z^0)=\frac{P\left (T(x,\xi,q,\bar z_q,\omega,Z^0)
, (\partial^\mu \bar f(\overline{\omega +q}), \partial^\mu f(\omega+q))_{|\mu|\leq 2sN'} \right )}{D\left (T (x,\xi,q,\bar z_q,\omega,Z^0), (\partial^\mu \bar f(\overline{\omega+q}), \partial^\mu f(\omega+q))_{|\mu| \leq 2sN'} 
\right )}
\end{equation}
for all $x\in S$ and $\xi,Z^0,\omega$ sufficiently small (depending on $q$) and $\omega \in M^{(q)}$. Furthermore, the reader may easily check that  the  mapping $Z^0\mapsto \Delta (x) \Upsilon \left ( x, \frac{\xi}{\Delta (x)^2}, \frac{q}{\Delta (x)^2}\frac{\bar z_q}{\Delta (x)^2}, \frac{\omega}{\Delta (x)^2},\frac{Z^0}{\Delta (x)^2}\right )$ is of full rank $N$ for $(x,\xi,Z^0,\omega)$'s as above. This implies that the denominator  in \eqref{e:graal} does not vanish identically (for all above $(x,\xi,Z^0,\omega)'s$). We may rewrite \eqref{e:graal} as follows:
\begin{equation}\label{e:graalbis}
f(Z)=\frac{
	P\left (T(x,\xi,q,\bar z_q,\omega,Z-q)
, (\partial^\mu \bar f(\overline{\omega +q}), \partial^\mu f(\omega+q))_{|\mu|\leq 2sN'} \right )
}{
	D\left (
		T (x,\xi,q,\bar z_q,\omega,Z-q), (\partial^\mu \bar f(\overline{\omega+q}), \partial^\mu f(\omega+q))_{|\mu| \leq 2sN'}
	\right )
}
\end{equation}
for all $Z$ close to $q$ and $x,\xi,\omega$ as above. 

Set $t=(x,\xi)$ and $H(t,p,q,Z):=T(x,\xi,q,\bar z_q,p-q,Z-q)$. Then, by the above, $H$ is a real-analytic map for $x\in S$, $|\xi|<\delta^2C_4$, $|p|<\frac{\delta^2 C_4}{2}$, $|q|<\frac{\delta^2 C_4}{2}$ and $|Z|< \frac{\delta^2 C_4}{2}$, and holomorphic in $(t,Z)$. We thus have proved the following:

\begin{Pro}\label{p:leader} 
Let $M\subset \C^N$ be a generic real-analytic minimal submanifold through the origin. Then there exist  a real-analytic map $H(t,p,q,Z)$ defined on some open polydisc $V \times W^3 \subset \C^r\times \C^{3N} $ for some integer $r\geq 1$, holomorphic with respect to $(t,Z)$, with $0\in W$,  a finite collection of (universal) holomorphic  $\C^{N'}$-valued polynomial maps $P_1,\ldots,P_J$, and a finite collection of (universal) holomorphic polynomials $D_1,\ldots,D_J$, such that for every $q\in M\cap W$, and every germ of a holomorphic map $f\colon (\C^N,q)\to \C^{N'}$ with $f(M)\subset \4S^{2N'-1}$, there exists  $1\leq j_0\leq J$, such that  for every $p\in M$ and $Z\in \C^N$ sufficiently close to $q$,  and every $t\in V$, 
\begin{equation}\label{e:eleven}
f(Z)= \frac{
	P_{j_0}\left (
		H (t,p,q,Z), (\partial^\mu \bar f(\bar p), \partial^\mu f(p))_{|\mu|\leq 2sN'} 
	\right )
}{
	D_{j_0}\left (
		H (t,p,q,Z), (\partial^\mu \bar f(\bar p), \partial^\mu f(p))_{|\mu| \leq 2sN'}
	\right )
},
\end{equation}
where the denominator in \eqref{e:eleven} does not vanish identically for $(t,p,Z)$ as above.
\end{Pro}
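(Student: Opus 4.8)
The plan is to upgrade the Segre-level parametrization of Proposition \ref{pro:inspection} into a genuine parametrization of $f(Z)$ for $Z$ ranging over a full neighborhood of the base point $q$, with the dependence on $q$ made explicit and all domains uniform in $q$. The starting point is identity \eqref{e:runawaybis}, which expresses the composition $(f\circ v^{2s})(t^0,\ldots,t^{2s-1};p)$ — not $f$ itself — as a universal rational expression in the jets $(\partial^\mu f(p),\partial^\mu\bar f(\bar p))_{|\mu|\le 2sN'}$ and the $M$-dependent map $\Phi$. Since $f$ enters only through $f\circ v^{2s}$, recovering $f(Z)$ amounts to inverting the iterated Segre map $v^{2s}$ in its parameter variables. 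Minimality is exactly what makes this possible: by the finite-type criterion Theorem \ref{t:criterion}, $v^{2s}$ attains full rank $N$ at the special parameter values in \eqref{e:rank}, while \eqref{e:vanishing} places these in the fiber over the origin.

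First I would introduce base-point-adapted variables. Writing a prospective base point as $q=(z_q,w_q)$, I would encode the shift by parameters $(\eta,\sigma)$ (playing the role of $q$) and $\theta$ (playing the role of $\bar z_q$), set up an affine reparametrization $L(x,u,\eta,\theta)$ of the Segre arguments, and define
$$\vartheta(x,u,\eta,\sigma,\theta,\omega):=v^{2s}\bigl(L(x,u,\eta,\theta);\omega+(\eta,\sigma)\bigr)-(\eta,\sigma).$$
The point of the subtraction and of the special ordering built into $L$ is twofold: by \eqref{e:vanishing} and \eqref{e:rank} one gets $\vartheta(x,0,0,0,0)=0$ together with the full-rank condition on $\partial\vartheta/\partial u$, while by \eqref{e:real} and $q\in M$ one gets the stronger vanishing $\vartheta(x,0,q,\bar z_q,0)\equiv0$ in $x$. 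I would then split $u=(\xi,y)$ so that $\Delta(x):=\det(\partial\vartheta/\partial y)(x,0,0,0,0)\not\equiv0$.

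Next comes the inversion. Because $\Delta$ vanishes on a proper subvariety, no ordinary implicit-function theorem applies, so I would invoke Proposition \ref{pro:BER} to solve $\vartheta=Z^0$ for $y$ in terms of the remaining variables rescaled by $\Delta(x)^2$; this produces a holomorphic $\Upsilon$ and, after substituting into the composition $\Psi$ of $\Phi$ with the reparametrized Segre map, a map $T$ that absorbs the controlled singular denominators. Substituting the solved value of $y$ into \eqref{e:runawaybis} turns the left-hand side $f\circ v^{2s}$ into $f(q+Z^0)$, which gives the sought rational representation of $f(Z)$; renaming $t=(x,\xi)$ and setting $H(t,p,q,Z):=T(x,\xi,q,\bar z_q,p-q,Z-q)$ yields \eqref{e:eleven}.

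The hard part will be bookkeeping of domains rather than any single estimate. I must arrange that the $x$-domain on which the final identity holds is \emph{independent of $f$ and of $q$}, and that the denominator is not identically zero. For the first, the crucial ingredient is the identity $\vartheta(x,0,q,\bar z_q,0)\equiv0$ coming from \eqref{e:real}, which lets the left-hand side $f\bigl(q+\vartheta(x,u,q,\bar z_q,\omega)\bigr)$ extend holomorphically across a fixed polydisc in $x$ even though $f$ is only a germ at $q$. For the second, one checks that $Z^0\mapsto\Delta(x)\,\Upsilon(\ldots,Z^0/\Delta(x)^2)$ is of full rank $N$, so the denominator cannot vanish identically. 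Finally, restricting $x$ to a relatively compact set $S$ avoiding $\{\Delta=0\}$ yields a uniform lower bound $\delta>0$ on $|\Delta|$, and this is precisely what converts the $\Delta$-rescaled domains furnished by Proposition \ref{pro:BER} into honest polydiscs $V\times W^3$ independent of $q$, completing the argument.
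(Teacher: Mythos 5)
Your proposal follows the paper's own proof essentially step for step: the same reparametrized Segre map $\vartheta$ built from $L$, the same application of Proposition \ref{pro:BER} after splitting $u=(\xi,y)$ with $\Delta(x)\not\equiv 0$, the same key identity $\vartheta(x,0,q,\bar z_q,0)\equiv 0$ (via \eqref{e:real} and $q\in M$) to make the $x$-domain uniform in $f$ and $q$, the same full-rank argument for the nonvanishing of the denominator, and the same choice of a relatively compact $S$ with $|\Delta|\ge\delta>0$ to obtain polydiscs independent of $q$, ending with $H(t,p,q,Z)=T(x,\xi,q,\bar z_q,p-q,Z-q)$. The argument is correct and matches the paper's lifting procedure.
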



\section{Proofs of Theorems \ref{t:mainbest} and  \ref{t:meromorphic} and Corollary \ref{c:global} }\label{s:final}

\subsection{Meromorphic extension to a larger neighborhood -- Proof of Theorem \ref{t:meromorphic}}
Without loss of generality, we may assume that $p_0=0$. Let $\Omega:=W$ where $W$ is given by Proposition \ref{p:leader}. Let $q\in \Omega$ and  $f\colon (M,q)\to \4S^{2N'-1}$ be a germ of a $\6C^\infty$-smooth CR map.  As already mentioned, we may assume that $f$ extends holomorphically to a neighborhood of $q$ in $\C^N$. Choosing some value of $t\in V$ and $p\in W$ such that the denominator in \eqref{e:eleven} does not vanish identically, we see that that $f$ admits a meromorphic extension to all of $\Omega$. The second part of the theorem follows from the first part in conjunction with \cite{Chia}. The proof is complete.

\subsection{Proof of Corollary \ref{c:global}} Fix $p_0\in M$ and $f\colon (M,p_0)\to \4S^{2N'-1}$ as  in the corollary. It follows from Theorem \ref{t:meromorphic} that $f$ extends holomorphically along any path in $M$ starting from $p_0$. Hence, since $M$ is connected and simply-connected, we can  extend, by analytic continuation, the local map $f$ holomorphically to a neighborhood of $M$ in $\C^N$.

The second part of Corollary \ref{c:global} follows from the first part of it together
with the regularity results in \cite{F89, Mi03}.

\subsection{Unique jet determination -- Proof of Theorem \ref{t:mainbest}}


The proof of Theorem \ref{t:mainbest} will follow once we have proved the following:


\begin{Pro}\label{p:dobetter}
Let $M\subset \C^N$ be a generic real-analytic minimal submanifold through $0$. 
Then there exists a neighbhorhood $U_0$ of $0$ in $\C^N$ and an integer $K>0$ such that for every $q\in M\cap U_0$, if $f,g\colon (\C^N,q)\to \C^{N'}$ are two germs of holomorphic maps sending $M$ into $\4S^{2N'-1}$ with $j_q^Kf=j_q^Kg$, then $f=g$.
\end{Pro}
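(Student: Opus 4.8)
The plan is to derive Proposition~\ref{p:dobetter} directly from the universal meromorphic parametrization in Proposition~\ref{p:leader}. The key observation is that in \eqref{e:eleven}, the map $f$ is expressed near $q$ as a ratio of universal polynomials $P_{j_0},D_{j_0}$ evaluated at $H(t,p,q,Z)$ (which depends only on $M$, not on $f$) and at the jets $(\partial^\mu \bar f(\bar p),\partial^\mu f(p))_{|\mu|\le 2sN'}$. Thus if two maps $f,g$ have the same jets of order $2sN'$ at a single point $p$, and both admit representations \eqref{e:eleven} with the \emph{same} index $j_0$ and the \emph{same} nonvanishing denominator, then $f(Z)=g(Z)$ for all $Z$ near $q$, forcing $f=g$. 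So the natural candidate is $K=2sN'$, and the neighborhood $U_0$ is $W$ from Proposition~\ref{p:leader}.

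First I would fix $q\in M\cap W$ and two germs $f,g\colon(\C^N,q)\to\C^{N'}$ sending $M$ into $\4S^{2N'-1}$ with $j_q^Kf=j_q^Kg$ for the $K$ to be chosen. By Proposition~\ref{p:leader}, there are indices $j_0,j_1\in\{1,\dots,J\}$ (for $f$ and $g$ respectively) and, for each, representations of the form \eqref{e:eleven} valid for $p\in M$ and $Z$ near $q$ and all $t\in V$, with denominators not vanishing identically. The subtlety is that the hypothesis $j_q^Kf=j_q^Kg$ controls the jets of $f$ and $g$ only at the \emph{base point} $q$, whereas \eqref{e:eleven} evaluates jets at a nearby point $p\in M$. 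So the first real step is to choose $p=q$: since $q\in M$, I may take $p=q$ in \eqref{e:eleven}, which then reads
\begin{equation}\label{e:proofplan}
f(Z)=\frac{P_{j_0}\left(H(t,q,q,Z),(\partial^\mu\bar f(\bar q),\partial^\mu f(q))_{|\mu|\le 2sN'}\right)}{D_{j_0}\left(H(t,q,q,Z),(\partial^\mu\bar f(\bar q),\partial^\mu f(q))_{|\mu|\le 2sN'}\right)},
\end{equation}
and similarly for $g$ with $j_1$. Now the jets $(\partial^\mu\bar f(\bar q),\partial^\mu f(q))_{|\mu|\le 2sN'}$ and $(\partial^\mu\bar g(\bar q),\partial^\mu g(q))_{|\mu|\le 2sN'}$ coincide as soon as $j_q^{2sN'}f=j_q^{2sN'}g$ (the antiholomorphic jets of $\bar f,\bar g$ at $\bar q$ being the conjugates of the holomorphic jets of $f,g$ at $q$). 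Choosing $K=2sN'$ therefore makes the jet-arguments in the representations for $f$ and for $g$ literally identical.

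The main obstacle is that $f$ and $g$ a priori use \emph{different} indices $j_0\ne j_1$, so one cannot immediately conclude $f=g$ by comparing \eqref{e:proofplan} for the two maps: the universal polynomials $P_{j_0},D_{j_0}$ and $P_{j_1},D_{j_1}$ differ. To handle this I would argue as follows. Fix a value $t_0\in V$ for which the denominator $D_{j_0}(H(t_0,q,q,Z),\cdot)\not\equiv0$ in $Z$; such $t_0$ exists by the nonvanishing clause of Proposition~\ref{p:leader}. Then $f$ is, near $q$, the specific meromorphic function of $Z$ given by the right-hand side of \eqref{e:proofplan} with $t=t_0$. But this right-hand side depends on $f$ only through its $2sN'$-jet at $q$, which equals that of $g$. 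Hence the same formula, with the same $t_0$, $j_0$, and the common jet data, produces a meromorphic function of $Z$ that agrees with $f$ near $q$; and by the representation \eqref{e:eleven} applied to $g$ with index $j_0$ — which is valid for \emph{every} admissible choice coming from the finite collection, once we know $g$ also satisfies $g(M)\subset\4S^{2N'-1}$ — one shows $g$ is given by the identical formula. Concretely, since the collection $\{P_j,D_j\}$ is universal and the representation holds for \emph{some} index for each map, and since the two maps share all jet data at $q$, I would show that whatever index $j_0$ represents $f$ also yields a valid representation of $g$ (both being holomorphic solutions of the same finite system of rational identities built from the Segre-map geometry encoded in $H$), so that $f$ and $g$ coincide as the same rational expression in $Z$. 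This gives $f\equiv g$ near $q$, hence $f=g$ as germs. Finally, taking $U_0=W$ and $K=2sN'$, which depend only on $M$ (through $s$ and the fixed neighborhoods), completes the proof; the uniform bound on the jet order over compact sets, needed for Theorem~\ref{t:mainbest}, then follows by covering a compact subset of $M$ by finitely many such neighborhoods.
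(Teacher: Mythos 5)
Your proposal contains a genuine gap, and it sits exactly at the point you yourself flag as ``the main obstacle''. Proposition~\ref{p:leader} only guarantees that \emph{some} index $j_0$ works for $f$ and \emph{some} index $j_1$ works for $g$; your resolution --- that the index representing $f$ ``also yields a valid representation of $g$'' because the two maps share the same $2sN'$-jet at $q$ --- is asserted, not proved, and it is not something equality of a finite jet can deliver. Tracing the index back through Propositions~\ref{pro:step1}--\ref{pro:inspection}, the choice of $j_0$ encodes generic-rank data of the germ (the degeneracy integer $k_0$, the multi-indices $\alpha^{(\ell)}$, the choice of invertible minors of the matrix $\bigl(\6L^{\alpha^{(\ell)}}\bar f\bigr)$), and generic ranks are properties of the \emph{entire} Taylor series: two germs can agree to any prescribed finite order at $q$ and still have different degeneracy types, hence require different indices. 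So the argument is circular precisely where the work has to happen. There is a second, related gap: you freeze $p=q$ in \eqref{e:eleven}. The proposition only asserts that the denominator does not vanish identically as a function of $(t,p,Z)$; it may vanish identically on the slice $\{p=q\}$, in which case your formula with $p=q$ reads $0/0$ and carries no information. Once these two steps fail, the explicit bound $K=2sN'$ has no justification either.

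For comparison, the paper resolves exactly these difficulties by never dividing, never fixing $p$, and never trying to match indices: for \emph{every pair} $(i,j)$ of indices it forms the cross-multiplied polynomials $R_{i,j,\nu}=P_{i,\nu}D_j-P_{j,\nu}D_i$, in which the jet data are replaced by independent indeterminates $\Lambda,\widehat\Lambda$, and it considers the ideal generated by all $Z$-derivatives $R^\alpha_{i,j,\nu}$ at $Z=q$ in the ring $\6A\bigl(\overline{V\times W^2}\bigr)[\Lambda,\overline{\Lambda},\widehat\Lambda,\overline{\widehat\Lambda}]$. By Frisch's theorem \cite{Fri} this ring is Noetherian, so each such ideal is generated by the $R^\alpha_{i,j,\nu}$ with $|\alpha|\le \ell_{i,j,\nu}$, and $K$ is the maximum of these finitely many bounds. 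Then $j^K_qf=j^K_qg$ gives $f-g=O(|Z-q|^{K+1})$, hence $R^\alpha_{j_1,j_2,\nu}=0$ for $|\alpha|\le K$ when evaluated at the actual jets of $f$ and $g$ at the \emph{variable} point $p\in M$ near $q$; by the ideal membership this forces $R^\alpha_{j_1,j_2,\nu}=0$ for all $\alpha$, i.e.\ $R_{j_1,j_2,\nu}\equiv 0$ in $Z$, and dividing by the (not identically vanishing) product of denominators yields $f=g$. Because the jets enter as indeterminates and $q$ enters as a ring variable, this single $K$ is uniform in $q\in M\cap W$ and in the pair of maps --- which is what the proposition claims and what Theorem~\ref{t:mainbest}'s compactness statement requires. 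Any repair of your outline needs a device of this kind to compare representations with \emph{different} indices; matching jet data alone cannot do it.
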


\begin{proof}

Let $H$, $V$, $W$  and the collection of polynomial maps $P_j$ and $D_j$, $1\leq j\leq J$, be  given by Proposition \ref{p:leader}.  For each $\C^{N'}$-valued polynomial map $P_j$, we write $P_j=(P_{j,1},\ldots,P_{j,N'})$. Shrinking $V$ and $W$ if  necessary we may assume that the map $H$ 
is real-analytic in a neighborhood of the closure of  $V\times W^3$. 
We also introduce, for every $\mu \in \N^N$,  new independent complex variables  $\Lambda^\mu \in \C^{N'}$ and $\widehat \Lambda^\mu \in \C^{N'}$, and write $\Lambda =(\Lambda^\mu)_{|\mu|\leq 2sN'}$, $\widehat \Lambda =(\widehat \Lambda^\mu)_{|\mu|\leq 2sN'}$.  For any open set $\Omega$ in some real manifold, we  write $\6A (\overline{\Omega})$ for the ring of real-analytic functions in a neighborhood of $\overline{\Omega}$. For $1\leq i,j\leq J$, $1\leq \nu \leq N'$,  we set 
\begin{equation}\label{e:def}
\begin{aligned}
R_{i,j,\nu}(t,p,q, Z, \Lambda, \widehat \Lambda ):=&P_{i,\nu}  (H (t,p,q,Z), \overline{\Lambda}, \Lambda )\ D_{j}(H (t,p,q,Z), \overline{ \widehat \Lambda}, \widehat \Lambda)\\
&-P_{j,\nu}(H (t,p,q, Z), \overline{\widehat \Lambda},\widehat \Lambda) \ D_{i}(H (t,p,q, Z), \overline{\Lambda}, \Lambda),
\end{aligned}
\end{equation}
and also define, for each $\alpha \in \N^N$,  
$$
	R^\alpha_{i,j,\nu}(t,p,q, \Lambda, \widehat \Lambda)
	:=
	\frac{
		\partial^{|\alpha|} R_{i,j,\nu}
	}{
		\partial Z^\alpha
	}
	(t,p,q, q,\Lambda, \widehat \Lambda)
	\in 
	\6A \left (\overline{V \times W^2} \right) \left [\Lambda , \widehat \Lambda, \overline{\Lambda}, \overline{\widehat \Lambda}\right].
$$
For $1\leq i,j\leq J$ and $1\leq \nu \leq N'$, let $\6I_{i,j,\nu}$ be the ideal generated by the $R^\alpha_{i,j,\nu}$ for $\alpha \in \N^N$ in the ring $\6R:=\6A \left (\overline{V \times W^2} \right) \left [\Lambda ,\overline{\Lambda},\widehat \Lambda, \overline{\widehat \Lambda} \right]$. By \cite{Fri}, the ring  $\6A(\overline{V \times W^2})$ is noetherian, and therefore, so is $\6R$.  Hence there is an integer $\ell_{i,j,\nu}$ such that $\6I_{i,j,\nu}$ is generated, as an ideal in $\6R$, by the $R^\alpha_{i,j,\nu}$ for $|\alpha|\leq \ell_{i,j,\nu}$. Set $K={\rm max} \{\ell_{i,j,\nu}:1\leq i,j\leq J,\, 1\leq \nu \leq N'\}$. We claim that the conclusion of the proposition holds with $U_0=W$ and the above mentioned choice of $K$.

Indeed,  pick $q\in M\cap W$ and assume that $f,g\colon (\C^{N},q)\to \C^{N'}$ are two germs of holomorphic maps sending $M$ into $\4S^{2N'-1}$, with $j_q^{K}f=j^{K}_qg$. It follows from Proposition \ref{p:leader} that we may find $1\leq j_1,j_2\leq J$ such that
\begin{equation}
f(Z)= \frac{P_{j_1}\left (H (t,p,q,Z), (\partial^\mu \bar f(\bar p), \partial^\mu f(p))_{|\mu|\leq 2sN'} \right )}{D_{j_1}\left ((H (t,p,q, Z), (\partial^\mu \bar f(\bar p), \partial^\mu f(p))_{|\mu| \leq 2sN'}) \right )}
\end{equation}
\begin{equation}
g(Z)= \frac{
	P_{j_2}\left (H (t,p, q,Z), (\partial^\mu \bar g(\bar p), \partial^\mu g(p))_{|\mu|\leq 2sN'} \right )
}{
	D_{j_2}\left ((H (t,p, q, Z), (\partial^\mu \bar g(\bar p), \partial^\mu g(p))_{|\mu| \leq 2sN'}) \right )
}\\ 
\end{equation}
for $Z$ sufficiently close to $q$, $t\in V$, and $p\in M$ sufficiently close to $q$. Since $f(Z)-g(Z)=O(|Z-q|^{K+1})$, we get that for all $t$, $Z$ and $p$ as above, and for $1\leq \nu \leq N'$, 
$$R_{j_1,j_2,\nu}(t,p,q, Z,( \partial^\mu f(p))_{|\mu|\leq 2sN'},( \partial^\mu g(p))_{|\mu|\leq 2sN'})=O(|Z-q|^{K+1}),$$
or equivalently that 
$$R^\alpha _{j_1,j_2,\nu}(t,p,q,(\partial^\mu f(p))_{|\mu|\leq 2sN'},( \partial^\mu g(p))_{|\mu|\leq 2sN'})=0,\ |\alpha| \leq K.$$
By the choice of $K=\ell^0$, we get that for $\nu=1,\ldots,N'$,
$$R_{j_1,j_2,\nu}(t,p,q, Z, (\partial^\mu f(p))_{|\mu|\leq 2sN'},( \partial^\mu g(p))_{|\mu|\leq 2sN'})=0$$
for $Z\in W$, $t\in V$, and $p\in M$ sufficiently close to $q$, which implies that $f(Z)=g(Z)$ for $Z$ close to $q$, i.e. $f=g$.
\end{proof}

Since any real-analytic CR submanifold in $\C^N$ is locally biholomorphically equivalent to a product manifold $\Sigma\times \{0\}\subset \C^{N-e}\times \C^e$ for some real-analytic generic submanifold $\Sigma \subset \C^{N-e}$, the following result follows at once from Proposition \ref{p:dobetter} and \cite{MMZ03}.
\begin{Pro}\label{p:tired}
Let $M\subset \C^N$ be a real-analytic minimal CR submanifold through the origin. Then there exists a neighbhorhood $M_0$ of $0$ in $M$ and an integer $K>0$ such that for every $q\in M_0$, if 
$f,g\colon (M,q)\to \4S^{2N'-1}$ 
are two germs of $\6C^\infty$-smooth CR maps with $j_q^Kf=j_q^Kg$, then $f=g$.
\end{Pro}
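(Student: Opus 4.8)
The plan is to reduce the general CR case to the generic case already settled in Proposition~\ref{p:dobetter}, using the product structure of real-analytic CR submanifolds together with the holomorphic extendability of smooth CR maps.

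First I would normalize the submanifold. Since $M$ is a real-analytic CR submanifold through $0$, after a local biholomorphic change of coordinates near $0$ there is an identification $\C^N\cong\C^{N-e}\times\C^e$ under which $M$ becomes $\Sigma\times\{0\}$, where $\Sigma\subset\C^{N-e}$ is a generic real-analytic submanifold of the same CR dimension as $M$. Under this identification the CR orbits of $M$ and of $\Sigma$ coincide (the $\C^e$-factor plays no role), so minimality of $M$ is equivalent to minimality of $\Sigma$; thus $\Sigma$ is a generic real-analytic minimal submanifold, to which Proposition~\ref{p:dobetter} applies. A germ of a $\6C^\infty$-smooth CR map $f\colon(M,q)\to\4S^{2N'-1}$ corresponds, via this identification, to a germ of a $\6C^\infty$-smooth CR map on $(\Sigma,q')$, where $q'$ is the image of $q$.

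Next I would pass to holomorphic extensions. By the regularity results of \cite{MMZ03,Mi03}, each such CR map on $\Sigma$ extends holomorphically to a germ of a holomorphic map $F\colon(\C^{N-e},q')\to\C^{N'}$ sending $\Sigma$ into $\4S^{2N'-1}$; write $G$ for the corresponding extension of $g$. Since $\Sigma$ is generic, it is a uniqueness set for holomorphic functions near $q'$, so the holomorphic extension is unique and, moreover, the $K$-jet of $F$ at $q'$ is completely determined by the jet of the CR map along $\Sigma$. Hence the hypothesis $j_q^Kf=j_q^Kg$ translates into $j_{q'}^KF=j_{q'}^KG$.

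Finally, applying Proposition~\ref{p:dobetter} to the generic minimal submanifold $\Sigma\subset\C^{N-e}$ furnishes a neighborhood $U_0'$ of the origin in $\C^{N-e}$ and an integer $K$ such that equality of $K$-jets at any point of $\Sigma\cap U_0'$ forces $F=G$; taking $M_0$ to be the preimage of $U_0'$ under the normalizing biholomorphism gives $F=G$, and restricting back to $M$ yields $f=g$. The main subtlety to be checked is the bookkeeping in the middle step: one must verify that the $K$-jet of the CR map on $M$ and the $K$-jet of its holomorphic extension on $\C^{N-e}$ determine one another (so that no jet order is lost in the translation), and that the neighborhood and the integer $K$ produced for $\Sigma$ can be transported back to a single neighborhood $M_0$ of $0$ in $M$ valid for all base points $q$ simultaneously.
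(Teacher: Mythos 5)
Your proposal is correct and follows essentially the same route as the paper: the paper's proof is exactly the reduction to the generic case via the local equivalence $M\cong\Sigma\times\{0\}\subset\C^{N-e}\times\C^e$, holomorphic extension of the smooth CR maps via \cite{MMZ03}, and an application of Proposition~\ref{p:dobetter} to $\Sigma$. You merely spell out the details (transfer of minimality, the jet bookkeeping between CR jets on $\Sigma$ and holomorphic jets of the extensions, and the uniformity of $K$ and of the neighborhood) that the paper compresses into a single sentence.
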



Theorem \ref{t:mainbest} is  then a straightforward consequence of Proposition \ref{p:tired}.

\section*{Acknowledgements} The authors would like to thank the referees for their remarks which improved the readability of the paper.

\end{document}